\long\def\drop#1{}
\newtheorem{Theorem}{Theorem}
\newtheorem{Lemma}{Lemma}
\newtheorem{Corollary}{Corollary}
\newtheorem{Example}{Example}
\title{Semicontinuity of Eigenvalues under Flat Convergence in Euclidean Space}
\author{Jacobus W Portegies}
\begin{document}

\maketitle

\begin{abstract}
Recall that Federer-Fleming defined the notion of flat convergence of
submanifolds of Euclidean space to solve the Plateau problem. 
Here we prove the upper semicontinuity of Neumann eigenvalues of the submanifolds when they converge in the flat sense without losing volume. 
With an additional condition on the boundaries of the submanifolds we prove the Dirichlet eigenvalues are semicontinuous as well. 
We show this additional boundary condition is necessary as well as the
condition that the volumes converge to the volume of the limit submanifold. 
As an application of our theorems we see that the Dirichlet and Neumann eigenvalues of a sequence of surfaces with a common smooth boundary curve approaching the solution to the Plateau problem are upper semicontinuous. 
This work is built upon Fukaya's study of the metric measure convergence of Riemannian manifolds. 
One may recall that Cheeger-Colding proved continuity of the eigenvalues when manifolds with uniform lower Ricci curvature bounds converge in the metric measure sense. 
While they obtain continuity, here, we produce an example demonstrating that continuity is impossible to obtain with our weaker hypothesis.
\end{abstract}

What happens to the eigenvalues of the Laplace operator on manifolds when the manifolds converge? 
In case the manifolds in the sequence have uniformly bounded sectional curvatures and converge in the topology of metric measure spaces, Fukaya \cite{fukaya_collapsing_1987} shows that the eigenvalues of the Laplacian converge to the eigenvalues of a self-adjoint, positive operator on the limit space. 
In the same paper, Fukaya already conjectures that it should be possible to obtain the same result if it is only known that the Ricci curvature is bounded from below. 

In the third of a series of papers, Cheeger and Colding \cite{cheeger_structure_2000} give a positive answer to the conjecture. 
They show how you can define a Laplace operator on the limit space and that the eigenvalues of Laplace operators on the manifolds in the sequence converge.

Sometimes it is useful to consider a topology different from the one of metric measure spaces. For instance, the intrinsic flat convergence introduced by Sormani and Wenger \cite{sormani_intrinsic_2011} has been applied by 
Lee and Sormani \cite{lee_stability_2011} to show a version of stability of the Schoen-Yau Positive Mass Theorem \cite{schoen_proof_1979}.
The idea is that manifolds induce currents on the underlying metric space. 
The \emph{intrinsic flat distance} between those currents is calculated by taking the infimum of the flat distances between the push forwards of the currents under all possible isometries into all possible common metric spaces. 
One of the advantages of considering the induced currents, is that the notion of boundary is retained.

We will look at the related question for flat convergence of currents in Euclidean space, as introduced by Federer and Fleming \cite{federer_normal_1960}. 
We wonder what happens to the eigenvalues of the Laplace operator on the manifolds when the induced currents converge in the flat distance to a limit current. 
As we do not assume any bounds on the curvature of the manifolds, we cannot expect that the eigenvalues will be continuous (c.f. \cite{fukaya_collapsing_1987} and Example \ref{ex:surfacerevexample}). 
As Fukaya \cite{fukaya_collapsing_1987} shows in the case of metric measure convergence, with a suitable definition of the eigenvalues on the limit spaces, they do exhibit \emph{semicontinuity}. 
We obtain the following analogous theorem in case of flat convergence of the induced currents.

\begin{Theorem}
\label{co:semimanifoldneumann}
Let $M_i$ ($i=1,2,\dots$) and $M$ be smooth, oriented, compact Riemannian submanifolds of Euclidean space with (possibly empty) boundary, such that the currents induced by $M_i$ as $i\to\infty$ converge in the flat sense to the current induced by $M$. 
Moreover, assume that $\mathrm{Vol}(M_i) \to \mathrm{Vol}(M)$. 
Then
\begin{equation}
\limsup_{i \to \infty} \lambda_k(M_i) \leq \lambda_k(M),
\end{equation} 
where $\lambda_k$ is the $k$th eigenvalue of the (positive) Laplace(-Beltrami) operator on the manifold with Neumann boundary condition.
\end{Theorem}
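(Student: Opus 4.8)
The plan is to manufacture trial functions on each $M_i$ out of the Neumann eigenfunctions of $M$ and to plug them into the min--max principle; the one non-classical move is to carry out the transplantation through the ambient Euclidean space. First I would upgrade the hypotheses from convergence of currents to convergence of measures. Write $T_i$, $T$ for the integral currents induced by $M_i$, $M$; since these submanifolds are embedded and oriented, $\|T_i\|=\mathcal H^m\llcorner M_i$ and $\|T\|=\mathcal H^m\llcorner M$, with total masses $\mathrm{Vol}(M_i)$, $\mathrm{Vol}(M)$. Flat convergence $\mathcal F(T_i-T)\to 0$ implies $T_i(\omega)\to T(\omega)$ for every smooth compactly supported $m$-form $\omega$, whence lower semicontinuity of mass on open sets: $\liminf_i\mathcal H^m(M_i\cap G)\ge\mathcal H^m(M\cap G)$ for every open $G\subset\R^n$. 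Combining this with $\mathcal H^m(M_i)\to\mathcal H^m(M)$ via the portmanteau theorem gives $\mathcal H^m\llcorner M_i\to\mathcal H^m\llcorner M$ in the weak-$*$ topology of Radon measures on $\R^n$; in particular $\mathcal H^m(M_i\setminus U)\to 0$ for every neighbourhood $U$ of $M$, so the volume hypothesis is precisely what prevents mass from escaping $M$.

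Next I would fix the trial space. Let $\phi_0,\dots,\phi_k$ be $L^2(M)$-orthonormal Neumann eigenfunctions of $M$, so that $\int_M\langle\nabla^M\phi_a,\nabla^M\phi_b\rangle=\lambda_a(M)\,\delta_{ab}$ for $a,b\le k$. I would extend each $\phi_a$ to a function $\tilde\phi_a\in C^\infty_c(\R^n)$ as follows: if $\partial M\neq\emptyset$, first thicken $M$ across its boundary to a boundaryless smooth submanifold $\hat M\supset M$ of the same dimension and extend $\phi_a$ smoothly to $\hat M$; then, with $\pi$ the nearest-point projection of a tubular neighbourhood $V$ of $\hat M$ onto $\hat M$ and $\chi\in C^\infty_c(V)$ equal to $1$ near $M$, set $\tilde\phi_a:=\chi\cdot(\phi_a\circ\pi)$. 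Two elementary facts then drive everything: (i) on any submanifold the tangential gradient is the orthogonal projection of the ambient gradient, so $|\nabla^{M_i}\tilde\phi_a|\le|\nabla^{\R^n}\tilde\phi_a|$ pointwise on $M_i$; and (ii) since $d\pi_x$ is the orthogonal projection onto $T_x\hat M=T_xM$ for $x\in M$, one has $\nabla^{\R^n}\tilde\phi_a(x)=\nabla^M\phi_a(x)$ for every $x\in M$, so the fixed continuous functions $|\nabla^{\R^n}\tilde\phi_a|^2$ and $\tilde\phi_a\tilde\phi_b$ restrict on $M$ to $|\nabla^M\phi_a|^2$ and $\phi_a\phi_b$.

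Then I would transplant and conclude. For $i$ large put $E_i:=\mathrm{span}\{\tilde\phi_a|_{M_i}:a=0,\dots,k\}\subset C^\infty(M_i)$. For $c\in\R^{k+1}$ and $\tilde f=\sum_a c_a\tilde\phi_a$, the quadratic forms $c\mapsto\int_{M_i}\tilde f^2$ and $c\mapsto\int_{M_i}|\nabla^{\R^n}\tilde f|^2$ have coefficients $\int_{M_i}\tilde\phi_a\tilde\phi_b$ and $\int_{M_i}\langle\nabla^{\R^n}\tilde\phi_a,\nabla^{\R^n}\tilde\phi_b\rangle$; these are integrals of fixed compactly supported continuous functions against $\mathcal H^m\llcorner M_i$, so by the first step they converge to $\delta_{ab}$ and to $\lambda_a(M)\,\delta_{ab}$ respectively. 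Hence both forms converge uniformly on the unit sphere of $\R^{k+1}$; in particular $E_i$ is $(k+1)$-dimensional for $i$ large, $\inf_{|c|=1}\int_{M_i}\tilde f^2\to 1$, and, using (i), $\limsup_i\sup_{|c|=1}\int_{M_i}|\nabla^{M_i}\tilde f|^2\le\lim_i\sup_{|c|=1}\int_{M_i}|\nabla^{\R^n}\tilde f|^2=\sup_{|c|=1}\sum_a\lambda_a(M)\,c_a^2=\lambda_k(M)$. Inserting $E_i$ into the min--max formula $\lambda_k(M_i)=\inf_{\dim E=k+1}\sup_{0\neq g\in E}\frac{\int_{M_i}|\nabla^{M_i}g|^2}{\int_{M_i}g^2}$ and letting $i\to\infty$ then yields $\limsup_i\lambda_k(M_i)\le\lambda_k(M)$.

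The hard part will be the first step: genuinely converting flat convergence of currents into weak-$*$ convergence of the area measures, and it is there, and only there, that the assumption $\mathrm{Vol}(M_i)\to\mathrm{Vol}(M)$ is used --- without it mass could leak off $M$ and the inequality would fail (the paper later gives examples showing this condition cannot be dropped). Everything downstream is soft, the only mild technical nuisance being that when $\partial M\neq\emptyset$ one must thicken $M$ first so that the nearest-point projection $\pi$ is smooth on a neighbourhood of $M$.
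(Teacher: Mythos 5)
Your proposal is correct and follows essentially the same route as the paper: it rests on the weak-$*$ convergence of the mass measures $\|T_i\|\to\|T\|$ obtained from flat convergence together with $\mathrm{Vol}(M_i)\to\mathrm{Vol}(M)$ (the paper's Lemma~\ref{le:weakconvergence}), the tubular-neighborhood extension of the Neumann eigenfunctions of $M$ to $C^\infty_c(\mathbb{R}^N)$ (exactly as in the paper's Neumann lemma in Section~\ref{se:defevfunc}), and the pointwise domination $|\nabla^{M_i}g|\le|\nabla^{\mathbb{R}^N}g|$ of the tangential by the ambient gradient. The only difference is organizational: the paper factors the argument through the abstract functional $\lambda_k$ on currents (Theorem~\ref{th:currentneumann}) and then invokes the identification $\lambda_k([|M|])=\gamma_k$, whereas you perform the transplantation into the classical min--max on $M_i$ in a single pass, being somewhat more explicit about why the Gram matrix stays nondegenerate for large $i$.
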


The eigenvalues for the Dirichlet problem are also upper semicontinuous, as long as the boundaries satisfy a one-sided Hausdorff convergence, as stated precisely in the following theorem. 
We will show in Example \ref{ex:dirichlet} that we cannot just remove this condition on convergence of the boundaries.

\begin{Theorem}
\label{co:semimanifolddirichlet}
Let $M_i$ ($i=1,2,\dots$) and $M$ be smooth, oriented, compact Riemannian submanifolds of Euclidean space with boundary, such that $\mathrm{Vol}(M_i) \to \mathrm{Vol}(M)$ and the currents induced by $M_i$ as $i \to \infty$ converge in the flat sense to the current induced by $M$. 
Moreover, assume that for every $\epsilon > 0$, the boundaries $\partial M_i$ are eventually contained in the tubular $\epsilon$-neighborhood $(\partial M)_\epsilon$ of $\partial M$.
Then, with $\hat{\lambda}_k$ denoting the $k$th eigenvalue of the Laplace operator with Dirichlet boundary condition,
\begin{equation}
\limsup_{i \to \infty} \hat{\lambda}_k(M_i) \leq \hat{\lambda}_k(M).
\end{equation}

\end{Theorem}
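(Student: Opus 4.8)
The plan is to combine the min-max characterisation of the Dirichlet eigenvalues with a transplantation of nearly optimal test functions from $M$ to $M_i$ performed in the ambient space $\R^N$, the boundary hypothesis serving only to keep the transplanted functions admissible for the Dirichlet form. The first point is that flat convergence together with $\mathrm{Vol}(M_i)\to\mathrm{Vol}(M)$ upgrades to weak-$*$ convergence of the volume measures. Write $\mu_i,\mu$ for the finite Radon measures on $\R^N$ equal to $n$-dimensional Hausdorff measure restricted to $M_i$ and to $M$; these are the mass measures of the induced currents, all supported in a fixed ball. Flat convergence implies weak convergence of currents, so lower semicontinuity of mass gives $\mu(U)\le\liminf_i\mu_i(U)$ for every open $U$; combined with $\mu_i(\R^N)=\mathrm{Vol}(M_i)\to\mathrm{Vol}(M)=\mu(\R^N)$ this forces $\mu_i\weakto\mu$ (no extra mass can survive in the limit while the total masses match), hence $\int\phi\,d\mu_i\to\int\phi\,d\mu$ for every $\phi\in C_c(\R^N)$. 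If Theorem~\ref{co:semimanifoldneumann} is already established, this step and the limiting identities below may simply be quoted from its proof.

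Next, fix $\delta>0$ and, using density of $C_c^\infty(\mathrm{int}\,M)$ in $H^1_0(M)$ together with continuity of the Rayleigh quotient, pick $L^2(M)$-orthonormal functions $u^{(1)},\dots,u^{(k)}\in C_c^\infty(\mathrm{int}\,M)$ spanning a $k$-dimensional subspace $V\subset H^1_0(M)$ with $\max_{0\neq u\in V}\frac{\int_M|\nabla u|^2}{\int_M u^2}\le\hat\lambda_k(M)+\delta$. Let $\pi$ be the nearest-point projection onto $M$, which is smooth on a tubular neighbourhood $U_\rho$ of $M$ and whose differential at a point $x\in M$ is the orthogonal projection of $\R^N$ onto $T_xM$. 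Choose $\chi\in C_c^\infty(U_\rho)$ that equals $1$ on a neighbourhood of the compact set $K:=\bigcup_j\supp u^{(j)}$ and has $\supp\chi$ at positive Euclidean distance from $\partial M$ (possible since $K\subset\mathrm{int}\,M$ is compact and disjoint from the compact set $\partial M$), and set $\tilde u^{(j)}:=\chi\cdot(u^{(j)}\circ\pi)\in C_c^\infty(\R^N)$. Then $\tilde u^{(j)}|_M=u^{(j)}$; since $\chi\equiv1$ near $K$ and $d\pi_x$ projects onto $T_xM$, the Euclidean gradient satisfies $\nabla\tilde u^{(j)}=\nabla_M u^{(j)}$ on $M$ (in particular it is tangent to $M$ there), so that no normal derivative is created; and $\tilde u^{(j)}$ vanishes on the tubular neighbourhood $(\partial M)_{\eps_0}$ for some $\eps_0>0$.

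Finally, transplant. By the boundary hypothesis, $\partial M_i\subset(\partial M)_{\eps_0}$ for all large $i$, so $u_i^{(j)}:=\tilde u^{(j)}|_{M_i}$ vanishes on a relatively open neighbourhood of $\partial M_i$ in $M_i$ and hence lies in $C_c^\infty(\mathrm{int}\,M_i)\subset H^1_0(M_i)$; this is the sole place where the one-sided Hausdorff convergence of the boundaries is used, and its failure is precisely what Example~\ref{ex:dirichlet} will exploit. Put $V_i:=\mathrm{span}(u_i^{(1)},\dots,u_i^{(k)})$ and write a general element as $w_i=\tilde w|_{M_i}$ with $\tilde w=\sum_j c_j\tilde u^{(j)}$ and $w=\sum_j c_j u^{(j)}\in V$; because $\nabla_{M_i}w_i$ is the orthogonal projection of $\nabla\tilde w$ onto $TM_i$, we have $|\nabla_{M_i}w_i|^2\le|\nabla\tilde w|^2$ pointwise on $M_i$. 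Testing the weak-$*$ convergence from the first paragraph against the continuous, compactly supported functions $\tilde u^{(j)}\tilde u^{(l)}$ and $\nabla\tilde u^{(j)}\cdot\nabla\tilde u^{(l)}$, which on $M$ coincide with $u^{(j)}u^{(l)}$ and $\nabla_M u^{(j)}\cdot\nabla_M u^{(l)}$, gives $\int_{M_i}u_i^{(j)}u_i^{(l)}\to\delta_{jl}$ and $\int_{M_i}\nabla\tilde u^{(j)}\cdot\nabla\tilde u^{(l)}\,d\mu_i\to\int_M\nabla_M u^{(j)}\cdot\nabla_M u^{(l)}$; expanding $\int_{M_i}w_i^2$ and $\int_{M_i}|\nabla\tilde w|^2\,d\mu_i$ in the coefficients $c_j$, these finitely many limits yield, uniformly over unit coefficient vectors, $\int_{M_i}w_i^2\to\int_M w^2$ and $\int_{M_i}|\nabla\tilde w|^2\,d\mu_i\to\int_M|\nabla_M w|^2$, and in particular $V_i$ is $k$-dimensional for all large $i$. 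Hence
\[
\limsup_{i\to\infty}\ \max_{0\neq w_i\in V_i}\frac{\int_{M_i}|\nabla_{M_i}w_i|^2}{\int_{M_i}w_i^2}\ \le\ \max_{0\neq w\in V}\frac{\int_M|\nabla_M w|^2}{\int_M w^2}\ \le\ \hat\lambda_k(M)+\delta,
\]
and by the min-max principle the left-hand maximum bounds $\hat\lambda_k(M_i)$ from above, so $\limsup_i\hat\lambda_k(M_i)\le\hat\lambda_k(M)+\delta$; letting $\delta\downto0$ finishes the proof. I expect the genuine difficulty to be concentrated entirely in the first step: flat convergence on its own tolerates fine folds and oscillations that carry area at negligible flat cost, so the real content is that volume convergence forces the $\mu_i$ to converge weakly-$*$, after which the rest is a routine transplantation in which the boundary hypothesis plays a purely bookkeeping role.
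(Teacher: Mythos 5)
Your proof is correct and takes essentially the same approach as the paper: the paper packages the argument by introducing the ambient Rayleigh quotient $\hat\lambda_k$ on currents, proving it is upper semicontinuous under weak convergence with total-mass convergence (Theorem~\ref{th:currentdirichlet}, using the boundary hypothesis to keep a fixed test subspace admissible) and separately that it agrees with the Dirichlet eigenvalues on smooth submanifolds, whereas you fuse these two steps into one direct transplantation, but the ingredients — weak-$*$ convergence of mass measures from lower semicontinuity plus total-mass convergence, extension of near-optimal compactly supported test functions via nearest-point projection and a cutoff kept away from $\partial M$, the boundary hypothesis to make the restrictions admissible on $M_i$, and the pointwise inequality $|\nabla_{M_i}w_i|\le|\nabla\tilde w|$ — are exactly the paper's. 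One small point worth tidying: the nearest-point projection onto a manifold with boundary is not smooth near $\partial M$, so (as the paper does) one should first extend $M$ to a boundaryless $\tilde M$ and project onto $\tilde M$, or argue explicitly that $\supp\chi$ is contained in the open set where $\pi$ is smooth; with your choice of $\chi$ vanishing near $\partial M$ this is immediate, but it deserves a sentence.
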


Both theorems follow directly from the more general Theorems \ref{th:currentneumann} and \ref{th:currentdirichlet}. 
These theorems prove semicontinuity of functions $\lambda_k$ defined on currents, that reduce to the eigenvalues of the Laplace operator if the currents are induced by manifolds.
In order to get the semicontinuity, we assume that the currents converge weakly and the total mass of the currents does not drop in the limit.

The following corollary provides an example application.
\begin{Corollary}
\label{co:minimizingsurface}
Suppose $C$ is an oriented closed curve in $\mathbb{R}^3$ that has a unique smooth area-minimizing surface $M$ with $\partial M = C$.
If $M_j$ are submanifolds with the same boundary whose area converge to the area of $M$, and all of the $M_j$ are contained in a ball of radius $R>0$ around the origin, then the limsup of the Dirichlet and Neumann eigenvalues of $M_j$ are less than or equal to the corresponding Dirichlet and Neumann eigenvalues of $M$ respectively.
\end{Corollary}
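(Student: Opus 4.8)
The plan is to deduce the corollary from Theorems~\ref{co:semimanifoldneumann} and~\ref{co:semimanifolddirichlet}, so it suffices to verify their hypotheses for the sequence $(M_j)$ with limit $M$. Let $T_j$ denote the integral current induced by $M_j$ and let $[C]$ be the $1$-current induced by the oriented curve $C$. Since $\partial M_j = C = \partial M$, with orientations chosen so that $\partial T_j = [C] = \partial[M]$, the one-sided Hausdorff condition $\partial M_j \subseteq (\partial M)_\epsilon$ required in Theorem~\ref{co:semimanifolddirichlet} holds trivially, and $\mathbf{M}(T_j) = \mathrm{Area}(M_j) \to \mathrm{Area}(M)$ supplies the volume-convergence hypothesis of both theorems. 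Thus the only point that needs work is that $T_j$ converges to $[M]$ in the flat sense.

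To produce a limit I would invoke compactness. All $M_j$ lie in the fixed closed ball $\overline{B_R}$ of radius $R$ about the origin, the masses $\mathbf{M}(T_j)$ are uniformly bounded (they converge), and $\mathbf{M}(\partial T_j) = \mathbf{M}([C])$ is constant. By the Federer--Fleming compactness theorem for integral currents, every subsequence of $(T_j)$ has a further subsequence converging in the flat norm to an integral current $T$ with support in $\overline{B_R}$. Because $\partial$ is continuous under flat convergence, $\partial T = [C]$, and by lower semicontinuity of mass, $\mathbf{M}(T) \le \liminf_j \mathbf{M}(T_j) = \mathrm{Area}(M)$.

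It remains to identify $T$ with $[M]$. Let $m$ be the infimum of $\mathbf{M}(S)$ over integral currents $S$ in $\mathbb{R}^3$ with $\partial S = [C]$; this infimum is attained, again by Federer--Fleming. By interior regularity for codimension-one area-minimizing currents (the singular set is empty in these dimensions) together with boundary regularity for the oriented Plateau problem, any such minimizer is the current induced by a smooth compact oriented embedded surface spanning $C$; such a surface is smooth and area-minimizing, hence equals $M$ by the uniqueness hypothesis, so $m = \mathrm{Area}(M)$ and $[M]$ is the unique minimizer. Since $\mathbf{M}(T) \le \mathrm{Area}(M) = m$ and $\partial T = [C]$, the current $T$ is itself a minimizer, whence $T = [M]$. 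As every subsequential flat limit of $(T_j)$ equals $[M]$, the full sequence $T_j$ converges to $[M]$ in the flat sense, and Theorems~\ref{co:semimanifoldneumann} and~\ref{co:semimanifolddirichlet} then yield $\limsup_j \lambda_k(M_j) \le \lambda_k(M)$ and $\limsup_j \hat\lambda_k(M_j) \le \hat\lambda_k(M)$.

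I expect the identification step to be the main obstacle: it rests on the regularity theory for the Plateau problem (to know that a mass-minimizing integral current with smooth boundary curve in $\mathbb{R}^3$ is carried by a smooth surface) and on reading the hypothesis ``unique smooth area-minimizing surface'' strongly enough to pin down the unique mass-minimizing current. If instead one assumes directly that $[M]$ is the unique mass-minimizing integral current with boundary $[C]$, this step is immediate and the rest of the argument is soft.
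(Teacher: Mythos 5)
Your proposal is correct and follows essentially the same route as the paper's proof: invoke Federer--Fleming compactness to extract subsequential flat limits, then use the uniqueness hypothesis to identify every such limit with $[|M|]$ and conclude that the full sequence converges, after which Theorems~\ref{co:semimanifoldneumann} and~\ref{co:semimanifolddirichlet} apply. The paper states this quite tersely (``Because we assume uniqueness of the minimal surface, the limit of all these subsequences is the same''), while you spell out the two ingredients it leaves implicit: (i) lower semicontinuity of mass gives $\mathbf{M}(T)\le\mathrm{Area}(M)$ for any subsequential limit $T$ with $\partial T=[C]$, so $T$ is itself mass-minimizing, and (ii) codimension-one interior and boundary regularity show $T$ is carried by a smooth surface, which the uniqueness hypothesis then forces to be $M$ — a reading of the hypothesis you rightly flag as the one needed to close the argument.
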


The existence of a smooth minimal surface follows directly from the theory of currents and the regularity theory for minimal currents. By the Federer-Fleming Compactness Theorem, every subsequence has yet another subsequence that converges in the flat sense. Because we assume uniqueness of the minimal surface, the limit of all these subsequences is the same, and the $M_j$ actually converge in the flat distance to the minimal surface. 

We will recall some properties of currents, and clarify our notation in Section \ref{se:currentproperties}. Section \ref{se:defevfunc} introduces the eigenvalue functions $\lambda_k$ on currents and shows how they correspond to eigenvalues when the currents are induced by smooth manifolds. In Section \ref{se:uppersemicontinuity} we will show the semicontinuity of the functions $\lambda_k$ under weak convergence. Section \ref{se:examples} has an example showing the necessity of the assumption on the convergence of volumes, an example demonstrating that the eigenvalues are in general not continuous under flat convergence, and an example which shows that the assumption of the convergence of the boundaries is necessary in order to get semicontinuity of the eigenvalues for the Dirichlet problem.

In future work, we hope to show the upper semicontinuity of the eigenvalues under intrinsic flat convergence.

\subsection*{Acknowledgments}

I would like to thank Carolyn Gordon for suggesting to Christina Sormani to look at the behavior of eigenvalues under intrinsic flat convergence, and Kenji Fukaya for his suggestion to only prove semicontinuity of the eigenvalues. I thank Cristina Sormani for organizing the reading seminar at CUNY and suggesting that I work on the Euclidean submanifold version of this problem and to investigate the Dirichlet and Neumann cases as well. I thank my doctoral advisor, Fanghua Lin, for teaching me Geometric Measure Theory.

\section{Some preliminary properties of currents}
\label{se:currentproperties}

We would first like to recall some properties of currents in Euclidean space. 
We refer to \cite{lin_geometric_2002} and \cite{federer_geometric_1996} for details.

An $n$-current $T$ on $\mathbb{R}^N$ ($n \leq N$) is a functional on the space $\mathcal{D}_n$ of smooth differential $n$-forms on $\mathbb{R}^N$. 
We will write $\mathcal{E}_n$ for the space of $n$-currents.
To $T\in \mathcal{E}_n$ we associate a set function $\|T\|$, initially defined only on open sets $O$ by 
\begin{equation}
\|T\|(O) = \sup\{ T(\phi) \, | \, 
\phi \in \mathcal{D}_n, \, \| \phi \| \leq 1 , 
\, \mathrm{supp}(\phi) \subset O \}.
\end{equation}
Here, for an $n$-covector $\phi\in \wedge^n \mathbb{R}^N$, $\|\phi\|$ represents the comass,
\begin{equation}
\|\phi\| = \sup\{ \langle \xi, \phi \rangle \, | \, \xi \in \wedge_n \mathbb{R}^N, \xi \text{ simple }, |\xi| \leq 1 \},
\end{equation}
and $|\xi|$ is the Euclidean norm of the simple vector $\xi$.
If $\|T\|(\mathbb{R}^N) < \infty$, we say that $T$ has finite mass. 
We will assume this condition is satisfied throughout the paper.
Then $\|T\|$ extends to a Radon measure on $\mathbb{R}^N$, for which we will use the same notation and which we refer to as the mass of $T$, see for instance \cite[4.1.7]{federer_geometric_1996}.

The mass is lower semicontinuous, in that for any open set $O \subset \mathbb{R}^N$, and $T_i \rightharpoonup T$ weakly,
\begin{equation}
\|T\|(O) \leq \liminf_{i\to \infty} \|T_i\|(O).
\end{equation}

If an $n$-current $T$ has finite mass, it is representable by integration. That means that there exists a weakly $\|T\|$-measurable function $\xi:\mathbb{R}^N \to \wedge_n \mathbb{R}^N$, such that for $\|T\|$-almost every $x \in \mathbb{R}^N$, $\xi(x)$ is a simple vector with $|\xi(x)| = 1$, and for all $\phi \in \mathcal{D}_n$,
\begin{equation}
T(\phi) = \int \langle \phi, \xi \rangle \, d \| T \|.
\end{equation}
If both the mass of a current and the mass of its boundary are finite, the current is called \emph{normal}.

The boundary $\partial T$ of an $n$-current $T$ is an $(n-1)$-current defined by
\begin{equation}
\partial T (\omega) = T ( d \omega), \qquad \text{ for all } \omega \in \mathcal{D}_{n-1}.
\end{equation}

We say an $n$-current is rectifiable if $\|T\|$ is supported on a countably $n$-rectifiable set $A$, and $\|T\|$ is absolutely continuous with respect to $H^n\llcorner A$. In that case there exists a positive function $\theta$ on $A$ such that
\begin{equation}
\label{eq:reprrect}
T(\phi) = \int_A \langle \phi, \xi \rangle \theta \, d H^n.
\end{equation}
The current $T$ is called an integral rectifiable current if $\theta$ takes on integer values $H^n$-almost everywhere.

The flat distance $d_F$ between two $n$-currents $S$ and $T$ is given by
\begin{equation}
d_F( S, T ) := \inf \{ \|U\|(\mathbb{R}^N) + \|V\|(\mathbb{R}^N)
\, | \, S - T = U + \partial V, U \in \mathcal{E}_n, V \in \mathcal{E}_{n+1} \}.
\end{equation}
If currents converge in the flat distance, they converge weakly.

By the set $\mathrm{set}(T)$ of an $n$-current $T$, we mean the set of positive lower density, that is the set where
\begin{equation}
\label{eq:deflowerdensity}
\Theta_{*n}(\|T\|,x) := \liminf_{r \to 0} \frac{\|T\|(B(x,r))}{r^n}
\end{equation}
is positive. The set $\mathrm{set}(T)$ of an integral rectifiable $n$-current $T$ is the minimal possible choice of $A$ in the representation (\ref{eq:reprrect}).

Every $n$-dimensional, orientable submanifold $M$ in Euclidean space with finite volume induces an integer rectifiable $n$-current $[|M|]$ on the space, by
\begin{equation}
[|M|](\omega) = \int_M \omega, \qquad \omega \in \mathcal{D}_n.
\end{equation}
The notion of the boundary of a current and the boundary of a manifold correspond well with each other, in that for a smooth manifold $M$ with boundary $\partial M$,
\begin{equation}
[| \partial M |] = \partial [| M |].
\end{equation}

\section{Definition of eigenvalue functions}
\label{se:defevfunc}

The (positive) Laplace(-Beltrami) operator on a smooth manifold is an unbounded, self-adjoint operator and the spectral theorem applies. 
We wonder what happens to the spectrum when a sequence of manifolds converges. 
The notion of convergence that we are considering in this paper, is the one where the induced currents converge weakly to a limit current. 
The underlying set of the limit current is not necessarily a manifold anymore, but with the right conditions, it will still be rectifiable.

For sets that are just rectifiable, it is not so clear what would be the definition of the Laplacian. 
If we would know additionally that the sets have a doubling condition and some sort of Poincar\'{e} inequality, such a definition is possible (c.f. \cite{cheeger_differentiability_1999,cheeger_structure_2000}).

In this paper, we would like to follow the approach by Fukaya \cite{fukaya_collapsing_1987}, in which the eigenvalues are replaced by functions on the weaker space (in this case, the space of currents), that coincide with the eigenvalues when the elements in the weaker space actually come from manifolds. 
The definition is related to the min-max principle (c.f. \cite{reed_analysis_1978}).

For $k \in \mathbb{N}$, we define the functions $\lambda_k$ on the space of $n$-currents on $\mathbb{R}^N$ with finite, compactly supported  mass as follows: if $S$ is such a current, then

\begin{equation}
\label{eq:minmaxforlk}
\lambda_k(S) = \inf_{\Lambda^k\subset C^\infty(\mathbb{R}^N)} \sup_{f \in \Lambda^k\backslash \{0\}} 
\frac{ \int |\nabla f|^2  \, d \| S \| }
     { \int |f|^2 \, d \| S \| },
\end{equation}
where the infimum is over all $k$-dimensional subspaces $\Lambda^k$ of $C^\infty(\mathbb{R}^N)$  and by convention
\begin{equation}
\label{eq:convention}
\frac{\int | \nabla f |^2 \,d \| S \|}
     {\int |f|^2 \, d \|S\| } 
= \infty
\end{equation}
when $f$ is zero $\|S\|$-almost everywhere.

Fukaya \cite[(8.1)]{fukaya_collapsing_1987} similarly defines functions $\lambda_k$ on pairs $(X,\mu)$ where $\mu$ is a Borel measure on some Riemannian manifold $L$, whose support is contained in the subspace $X \subset L$. 
In fact, his definition does not really depend on $X$ and could instead be seen as a definition of functions $\lambda_k$ on measures on $L$. 
If we take $L = \mathbb{R}^N$, $X=M$ and the measure $\mu = \| S \|$, then our definition corresponds exactly with Fukaya's.

Besides associating a current, one can also associate a metric measure space $(M,\mu_M)$ to a Riemannian manifold $M$: the metric space would be the one induced by $M$, and $\mu_M = \mathrm{Vol}_M / \mathrm{Vol(M)}$, where $\mathrm{Vol}_M$ is the volume element of $M$. The measures $\mu_M$ and $\| [|M|] \|$ differ by a positive factor and therefore $\lambda_k(\mu_M)$ according to Fukaya's definition, and $\lambda_k([|M|])$ as defined above are the same. 

Similarly, we define functions $\hat{\lambda}_k$ on normal integer rectifiable $n$-currents $S$, for which the mass and the mass of the boundary are compactly supported, by
\begin{equation}
\hat{\lambda}_k (S):= 
\inf_{\Lambda^k \subset C^\infty_c(\mathbb{R}^N \backslash \mathrm{set} \, \partial S)} \sup_{f \in \Lambda_k \backslash \{ 0 \} } 
\frac{\int |\nabla f|^2 \, d\mu }
     {\int |f|^2 \, d\mu},
\end{equation}
where $\Lambda^k$ are $k$-dimensional subspaces of $C^\infty_c(\mathbb{R}^N \backslash \mathrm{set} \, \partial S)$. 

The following three Lemmas are all in the same spirit. They say that in certain cases in which the currents are induced by manifolds, the values of the $\lambda_k$ coincide with the eigenvalues of the Laplace operator on the manifolds. The first Lemma was stated but not proved by Fukaya in \cite[Lemma 8.3]{fukaya_collapsing_1987}, since the paper contained a proof of another Lemma that was similar but more difficult. We give a proof in this simpler case.

\begin{Lemma}
\label{le:lkforriemannian}
If $M$ is a compact, smooth Riemannian submanifold of $\mathbb{R}^N$ without boundary, then $\lambda_k([|M|])$ corresponds to the $k$th eigenvalue of the Laplace-Beltrami operator on $M$.
\end{Lemma}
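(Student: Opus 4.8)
The plan is to show that the variational quantity $\lambda_k([|M|])$ defined by the min-max formula \pref{eq:minmaxforlk} agrees with the classical $k$th Neumann (= closed, since $\partial M = \emptyset$) eigenvalue $\mu_k(M)$ obtained from the Rayleigh quotient over $C^\infty(M)$. The natural strategy is a two-sided inequality. For the inequality $\lambda_k([|M|]) \geq \mu_k(M)$, I would observe that $\|[|M|]\| = H^n \llcorner M = \mathrm{Vol}_M$, so that for any $f \in C^\infty(\mathbb{R}^N)$ the ratio appearing in \pref{eq:minmaxforlk} is
\[
\frac{\int_M |\nabla f|^2 \, d\mathrm{Vol}_M}{\int_M |f|^2 \, d\mathrm{Vol}_M},
\]
where $\nabla f$ is the ambient Euclidean gradient. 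The key point here is that for $x \in M$, the tangential part of $\nabla f(x)$ is exactly $\nabla_M(f|_M)(x)$, so $|\nabla f(x)|^2 \geq |\nabla_M(f|_M)(x)|^2$ with equality iff $\nabla f$ is tangent to $M$. Hence restriction $f \mapsto f|_M$ maps any $k$-dimensional subspace $\Lambda^k \subset C^\infty(\mathbb{R}^N)$ to a subspace of $C^\infty(M)$ on which the Rayleigh quotient is no larger; if the restriction map is injective on $\Lambda^k$ we directly get a $k$-dimensional competitor for $\mu_k(M)$, and if it is not injective some nonzero $f \in \Lambda^k$ vanishes on $M$, forcing the sup in \pref{eq:minmaxforlk} to be $+\infty$ by convention \pref{eq:convention}. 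Either way $\sup_{f \in \Lambda^k} (\text{ratio}) \geq \mu_k(M)$, and taking the infimum gives $\lambda_k([|M|]) \geq \mu_k(M)$.

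For the reverse inequality $\lambda_k([|M|]) \leq \mu_k(M)$, I would take the span $W$ of the first $k$ Laplace-Beltrami eigenfunctions $\varphi_1, \dots, \varphi_k$ on $M$ (which are smooth since $M$ is smooth and compact), on which the Rayleigh quotient over $M$ is bounded above by $\mu_k(M)$, and produce a $k$-dimensional subspace $\Lambda^k \subset C^\infty(\mathbb{R}^N)$ whose restriction to $M$ is close to $W$ and whose ambient ratios are close to the intrinsic ones. Concretely, I would extend each $\varphi_j$ to a function $\tilde\varphi_j \in C^\infty(\mathbb{R}^N)$; by compactness of $M$ and a partition-of-unity / tubular-neighborhood argument one may arrange that $\tilde\varphi_j|_M = \varphi_j$ and, crucially, that $\nabla \tilde\varphi_j$ is tangent to $M$ along $M$ (for instance by first extending $\varphi_j$ to the tubular neighborhood so that it is constant along the normal fibers, then cutting off). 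With such extensions $|\nabla \tilde\varphi_j|^2 = |\nabla_M \varphi_j|^2$ on $M$, so the ambient Rayleigh quotient of $\Lambda^k := \mathrm{span}(\tilde\varphi_1, \dots, \tilde\varphi_k)$ computed against $\|[|M|]\|$ equals exactly the intrinsic one against $W$, giving $\lambda_k([|M|]) \leq \sup_{f \in W} (\text{intrinsic ratio}) = \mu_k(M)$. One should check $\dim \Lambda^k = k$, which holds since the $\tilde\varphi_j$ restrict to the linearly independent $\varphi_j$.

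The main obstacle is the construction of the normal-derivative-free extension in the second half: one needs extensions $\tilde\varphi_j \in C^\infty(\mathbb{R}^N)$ that restrict to $\varphi_j$ on $M$ \emph{and} have gradient tangent to $M$ along $M$, since without the latter condition one only gets $|\nabla \tilde\varphi_j|^2 \geq |\nabla_M\varphi_j|^2$ and the inequality goes the wrong way. I expect this to be handled cleanly by working in a tubular neighborhood $U$ of $M$ with the nearest-point projection $\pi : U \to M$: setting $\tilde\varphi_j := (\varphi_j \circ \pi)\cdot \chi$ for a cutoff $\chi$ equal to $1$ near $M$ makes $\tilde\varphi_j$ constant on the normal fibers near $M$, hence $\nabla\tilde\varphi_j$ tangent to $M$ on $M$; the smoothness of $\pi$ near $M$ (valid since $M$ is smooth and compact, possibly with empty boundary as assumed here) is what makes this work. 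A minor technical point to address is that $C^\infty(\mathbb{R}^N)$ functions need not be compactly supported, but since $M$ is compact only their behavior on a neighborhood of $M$ matters, so multiplying by a cutoff changes nothing; alternatively one notes the infimum in \pref{eq:minmaxforlk} is unchanged if taken over $C^\infty_c(\mathbb{R}^N)$.
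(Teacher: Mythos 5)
Your proposal is correct and follows essentially the same route as the paper's proof: both directions use the identity $\|[|M|]\| = \mathrm{Vol}_M$; the inequality $\lambda_k \geq \gamma_k$ comes from restriction to $M$ together with $|\nabla_M(f|_M)| \leq |\nabla f|$ and the convention \pref{eq:convention} to handle non-injectivity; and the inequality $\lambda_k \leq \gamma_k$ comes from extending the eigenfunctions by precomposing with the nearest-point projection in a tubular neighborhood and cutting off, which forces the ambient gradient to be tangent to $M$ on $M$. The paper uses the specific cutoff $\rho(d(x,M)/\epsilon)$ where your write-up uses a generic cutoff $\chi\equiv 1$ near $M$, but this is the same device.
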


\begin{proof}
For notational convenience, set $S := [| M |]$. 
Let $\gamma_k$ ($k = 1,2,\dots$) denote the eigenvalues of the Laplace-Beltrami operator on $M$. 
By the min-max principle,
\begin{equation}
\label{eq:minmaxforgk}
\gamma_k = \inf_{\Gamma^k \subset C^\infty(M)} \sup_{g \in \Gamma^k\backslash \{0\}} 
\frac{\int |\nabla_M g|^2 \, d\mathrm{Vol}_M }
     {\int |g|^2 \, d\mathrm{Vol}_M },
\end{equation}
where the $\inf$ is over all $k$-dimensional subspaces of $C^\infty(M)$ and $\nabla_M$ is the intrinsic gradient on the manifold. 

Fix $\delta > 0$. By (\ref{eq:minmaxforlk}) we can find a $k$-dimensional subspace $\Lambda^k$ of $C^\infty(\mathbb{R}^N)$ such that 
\begin{equation}
\label{eq:pickLk}
\sup_{f \in \Lambda^k\backslash \{0\}} \frac{\int |\nabla f|^2 \, d \| S \| }
                        {\int |f|^2 \, d \| S \|  }
<
\lambda_k( S ) + \delta.
\end{equation}
Define
\begin{equation}
\Gamma^k := \{ g \in C^\infty(M) \, | \, g = f|_M, f \in \Lambda^k \}.
\end{equation}
Then $\Gamma^k$ is a linear subspace of $C^\infty(M)$. 
It is $k$-dimensional, because of the convention (\ref{eq:convention}). Indeed, if $f_1, \dots, f_k$ form a basis of $\Lambda^k$, then $g_i := f_i |_M$ ($i = 1, \dots, k$) span $\Gamma^k$. Now let $\alpha_1, \dots, \alpha_k \in \mathbb{R}$ be such that
\[
\alpha_1 g_1 + \cdots + \alpha_k g_k \equiv 0.
\]
If we take $f := \alpha_1 f_1 + \cdots + \alpha_k f_k$, we see that (\ref{eq:convention}) and (\ref{eq:pickLk}) imply that $f \equiv 0$. Hence $\alpha_1 = \cdots = \alpha_k = 0$, and the $g_i$ are independent for $i=1,\dots,k$.

Since $M$ is a compact, smooth manifold, there exists an $\epsilon>0$ such that in the tubular neighborhood $M_\epsilon\subset \mathbb{R}^N$ of $M$ of size $\epsilon$, the shortest distance projection $P:M_\epsilon \to M$ onto the manifold is well-defined and smooth. 
On $M$, the map $dP:TM_\epsilon \to TM$ is the orthogonal projection onto the tangent space to $M$ and $\nabla_M (f|_M) = dP (\nabla f)$. We find $|\nabla_M (f|_M)| \leq |\nabla f|$. 
Since also $\mathrm{Vol}_M = \|S\|$,
\begin{equation}
\sup_{g \in \Gamma^k\backslash \{0\}} \frac{\int |\nabla_M g|^2 \, d\mathrm{Vol}_M }
                       {\int |g|^2 \, d\mathrm{Vol}_M }
\leq
\sup_{f \in \Lambda^k\backslash \{0\}} \frac{\int |\nabla f|^2 \, d \| S \| }
                       {\int |f|^2 \, d \| S \| }
< \lambda_k(S) + \delta.
\end{equation}
By (\ref{eq:minmaxforgk}), we find $\gamma_k < \lambda_k(S) + \delta$ and since $\delta > 0$ was arbitrary,
\begin{equation}
\gamma_k \leq \lambda_k(S).
\end{equation}

Now let us prove the other inequality. 
If we write $\phi_k$ for the $k$th eigenfunction of the Laplace-Beltrami operator, and $\tilde{\Gamma}^k := \mathrm{span}\{\phi_1, \dots, \phi_k\}$, it holds that
\begin{equation}
\gamma_k 
= \sup_{g \in \tilde{\Gamma}^k\backslash \{0\}} \frac{ \int |\nabla_M g|^2 \, d \mathrm{Vol}_M }
                              { \int |g|^2 \, d \mathrm{Vol}_M }
= \frac{\int |\nabla_M \phi_k|^2 \, d\mathrm{Vol}_M}
                {\int |\phi_k|^2 \, d\mathrm{Vol}_M}.
\end{equation}
Let $\rho: \mathbb{R} \to [0,\infty)$ be a smooth, even function, nonincreasing on the positive halfline such that
\begin{equation}
\label{eq:defrho}
\rho(x) = 
\begin{cases}
1,  & |x| \leq \tfrac{1}{2}, \\
0,  & |x| \geq 1.
\end{cases}
\end{equation}
Define $\psi_k \in C^\infty(\mathbb{R}^N)$ by 
\begin{equation}
\psi_k(x) =
\begin{cases}
\phi_k( P(x) ) \rho( d(x,M)/\epsilon ),& x \in M_\epsilon,\\
0, & x \in \mathbb{R}^N \backslash M_\epsilon,
\end{cases}
\end{equation}
where $d(x,M)$ denotes the distance from $x$ to $M$. 
This definition implies that on $M$, $\phi_k = \psi_k$ and $\nabla_M \phi_k = \nabla \psi_k$. 
Let $\tilde{\Lambda}^k := \mathrm{span}\{ \psi_1, \dots, \psi_{k}\}$. 
Since the functions $\psi_i$ are independent when restricted to $M$, they are certainly independent as functions on $\mathbb{R}^N$. 
It follows that $\tilde{\Lambda}^k$ is $k$-dimensional.
Consequently,
\begin{equation}
\sup_{f \in \tilde{\Lambda}^k\backslash \{0\}} \frac{\int |\nabla f|^2 d\| S \|}
                        {\int |f|^2 d\| S \|}
\leq \sup_{g \in \tilde{\Gamma}^k\backslash \{0\}} \frac{ \int |\nabla_M g|^2 d \mathrm{Vol}_M }
                              { \int |g|^2 d \mathrm{Vol}_M } = \gamma_k.
\end{equation}
Hence, $\lambda_k( S ) = \gamma_k$.
\end{proof}

A similar statement holds for the Neumann eigenvalues in case the manifold has boundary.

\begin{Lemma}
If $M$ is a smooth submanifold of $\mathbb{R}^N$ with boundary and $\bar{M}$ is compact, then $\lambda_k([|M|])$ equals the $k$th eigenvalue of the Laplace-Beltrami operator with Neumann boundary conditions.
\end{Lemma}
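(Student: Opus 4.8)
The plan is to mimic the proof of Lemma~\ref{le:lkforriemannian} almost line by line, the only structural change being that the min-max principle for the Laplacian on a closed manifold is replaced by the min-max principle for the Neumann Laplacian: writing $S:=[|M|]$ and $\gamma_k$ for the $k$th Neumann eigenvalue of $M$, one has
\[
\gamma_k=\inf_{\Gamma^k\subset C^\infty(\bar M)}\ \sup_{g\in\Gamma^k\setminus\{0\}}\frac{\int_M|\nabla_M g|^2\,d\mathrm{Vol}_M}{\int_M|g|^2\,d\mathrm{Vol}_M},
\]
where the infimum runs over $k$-dimensional subspaces of $C^\infty(\bar M)$. This is the usual min-max characterization: $C^\infty(\bar M)$ is dense in the form domain $H^1(M)$ of the Neumann Laplacian, and --- in contrast with the Dirichlet case --- the admissible test functions carry no boundary constraint. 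As before, $\|S\|=\mathrm{Vol}_M$.

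The inequality $\gamma_k\le\lambda_k(S)$ goes through verbatim. Given $\delta>0$, choose a $k$-dimensional $\Lambda^k\subset C^\infty(\mathbb{R}^N)$ whose supremum of Rayleigh quotients against $\|S\|$ lies below $\lambda_k(S)+\delta$, as in (\ref{eq:pickLk}), and set $\Gamma^k:=\{f|_M:f\in\Lambda^k\}\subset C^\infty(\bar M)$. The convention (\ref{eq:convention}) together with (\ref{eq:pickLk}) shows $\Gamma^k$ is $k$-dimensional, exactly as in Lemma~\ref{le:lkforriemannian}. Since $\nabla_M(f|_M)$ is the orthogonal projection of $\nabla f$ onto $TM$ along $M$, we have $|\nabla_M(f|_M)|\le|\nabla f|$ on $M$, so every Rayleigh quotient over $\Gamma^k$ is at most the corresponding one over $\Lambda^k$; hence $\gamma_k<\lambda_k(S)+\delta$, and letting $\delta\downarrow 0$ gives $\gamma_k\le\lambda_k(S)$.

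For the reverse inequality the one new issue is that the nearest-point projection onto $M$ fails to be smooth near $\partial M$ --- points slightly beyond the boundary project onto $\partial M$ --- so the extension construction of Lemma~\ref{le:lkforriemannian} cannot be applied directly. To fix this I would first enlarge $M$: since $\bar M$ is compact and $M$ is smooth with boundary, one may collar $\partial M$ and extend $M$ to a smooth $n$-dimensional submanifold $\tilde M\subset\mathbb{R}^N$ \emph{without} boundary that contains $\bar M$ in its interior. Then there is an ambient neighborhood $U\supset\bar M$ carrying a well-defined smooth nearest-point projection $\tilde P:U\to\tilde M$ which, at points of $\tilde M$, restricts to the orthogonal projection onto the tangent space; moreover $T_xM=T_x\tilde M$ for $x\in M$. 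Extend the first $k$ Neumann eigenfunctions $\phi_1,\dots,\phi_k\in C^\infty(\bar M)$ to functions $\tilde\phi_i\in C^\infty(\tilde M)$, and define, with $\rho$ as in (\ref{eq:defrho}) and $\epsilon>0$ small enough that $\{x:d(x,\tilde M)<\epsilon\}$ lies in $U$ near $\bar M$,
\[
\psi_i(x)=\tilde\phi_i\bigl(\tilde P(x)\bigr)\,\rho\bigl(d(x,\tilde M)/\epsilon\bigr)
\]
on a neighborhood of $\bar M$ and $\psi_i=0$ elsewhere. As in Lemma~\ref{le:lkforriemannian} the $\psi_i$ are smooth on $\mathbb{R}^N$; since $\rho\equiv1$ near $0$ (so $\rho(d(\cdot,\tilde M)/\epsilon)\equiv1$ on an ambient neighborhood of $\bar M$) and $\tilde P$ is the identity on $\tilde M\supset M$, one checks $\psi_i|_M=\phi_i$ and $\nabla\psi_i|_M=\nabla_M\phi_i$. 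Hence $\tilde\Lambda^k:=\mathrm{span}\{\psi_1,\dots,\psi_k\}$ is $k$-dimensional (the $\psi_i$ are already independent on $M$), and each Rayleigh quotient over $\tilde\Lambda^k$ against $\|S\|$ equals the corresponding one over $\tilde\Gamma^k:=\mathrm{span}\{\phi_1,\dots,\phi_k\}$ against $\mathrm{Vol}_M$, whose supremum is $\gamma_k$. Therefore $\lambda_k(S)\le\gamma_k$, and together with the first part $\lambda_k([|M|])=\gamma_k$.

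I expect the one genuinely non-routine ingredient to be the enlargement step: the standard but slightly technical fact that a compact smooth submanifold-with-boundary of $\mathbb{R}^N$ can be extended past its boundary so that the ambient tubular-neighborhood and nearest-point-projection machinery is available uniformly up to $\partial M$. Everything else is a transcription of the closed case, with the only additional care being the verification that the Neumann min-max may legitimately be taken over $k$-dimensional subspaces of $C^\infty(\bar M)$ rather than all of $H^1(M)$.
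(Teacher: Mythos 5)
Your proposal follows essentially the same route as the paper's proof: both directions use the Neumann min--max characterization over $C^\infty(\bar M)$, and the reverse inequality is handled by extending $M$ to a boundaryless submanifold $\tilde M$, extending the eigenfunctions, and pulling them back through the nearest-point projection with a transverse cutoff $\rho(d(\cdot,\tilde M)/\epsilon)$. One detail you should make explicit (as the paper does) is that the extensions $\tilde\phi_i$ be taken in $C_c^\infty(\tilde M)$: your cutoff only damps in the direction normal to $\tilde M$, so where your ambient neighborhood of $\bar M$ is exited along $\tilde M$ the factor $\rho\equiv 1$, and the definition ``$\psi_i = 0$ elsewhere'' produces a smooth global function only because $\tilde\phi_i\circ\tilde P$ already vanishes there.
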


\begin{proof}
The first part of the proof is exactly the same as in Lemma \ref{le:lkforriemannian}, as (c.f. \cite[Chapter 7]{davies_spectral_1996}) also for the $k$th eigenvalue of the Laplace operator with Neumann boundary conditions, $\gamma_k$,

\begin{equation}
\gamma_k = \inf_{\Lambda^k \subset C^\infty(\bar{M})} \sup_{g \in \Lambda^k \backslash \{0\} } 
\frac{\int |\nabla_M g|^2 \, d \mathrm{Vol}_M}
     {\int |g|^2 \, d \mathrm{Vol}_M}.
\end{equation}
The same arguments as in Lemma \ref{le:lkforriemannian} show that $\gamma_k \leq \lambda_k$.

For the second part, $M$ can be embedded into a smooth $n$-dimensional submanifold of $\mathbb{R}^N$ without boundary, $\tilde{M}$. 
By regularity theory, the eigenfunctions $\phi_k$ of the Laplace operator with Neumann boundary conditions can be extended to functions in $C_c^\infty(\tilde{M})$, for which we use the same notation.
There is a tubular neighborhood $U$ of $M$ such that the shortest distance projection $P$ onto $\tilde{M}$ is well-defined and smooth in $U$.
By choosing $\epsilon$ small enough, the function 
\begin{equation}
\psi_k(x) = 
\begin{cases}
  \phi_k(P(x)) \rho( d(x, P) / \epsilon ),&  x \in U,\\
  0, & \text{otherwise},
\end{cases}
\end{equation}
with $\rho$ as in the proof of Lemma \ref{le:lkforriemannian}, is in $C^\infty(\mathbb{R}^N)$. Moreover, on $M$ the function values and the gradient correspond to those of $\phi_k$. Hence $\lambda_k = \gamma_k$.
\end{proof}

Finally, we have an analogous statement for the Dirichlet eigenvalue problem.

\begin{Lemma}
If $M$ is a smooth submanifold of $\mathbb{R}^N$ with smooth boundary and $\bar{M}$ is compact, then $\hat{\lambda}_k( [|M|] )$ equals the $k$th eigenvalue of the Dirichlet eigenvalue problem for the Laplacian on $M$.
\end{Lemma}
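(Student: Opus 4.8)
The plan is to follow the same two-step strategy as in the previous two lemmas, with $S := [|M|]$. Since $\partial M$ is a smooth compact manifold, $\partial S = [|\partial M|]$ is a normal current with compactly supported mass and $\mathrm{set}\,\partial S = \partial M$; in particular $C_c^\infty(\mathbb{R}^N \setminus \mathrm{set}\,\partial S) = C_c^\infty(\mathbb{R}^N \setminus \partial M)$, and $S$ lies in the domain of $\hat{\lambda}_k$. Write $\gamma_k$ for the $k$th Dirichlet eigenvalue of $M$; by the min-max principle (e.g. \cite[Chapter 7]{davies_spectral_1996}),
\[
\gamma_k = \inf_{\Gamma^k \subset C_c^\infty(M)} \sup_{g \in \Gamma^k \setminus \{0\}} \frac{\int |\nabla_M g|^2 \, d\mathrm{Vol}_M}{\int |g|^2 \, d\mathrm{Vol}_M},
\]
the infimum over $k$-dimensional subspaces of $C_c^\infty(M)$, which is dense in $H_0^1(M)$. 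Because $\mathrm{Vol}_M = \|S\|$ and, as recalled in the proof of Lemma~\ref{le:lkforriemannian}, $|\nabla_M(f|_M)| \le |\nabla f|$ on $M$, the argument there gives $\gamma_k \le \hat{\lambda}_k(S)$ as soon as one checks that restriction to $M$ carries a $k$-dimensional subspace of $C_c^\infty(\mathbb{R}^N \setminus \partial M)$ to a $k$-dimensional subspace of $C_c^\infty(M)$. For the first claim, if $f \in C_c^\infty(\mathbb{R}^N \setminus \partial M)$ then $\supp f \cap \bar M$ is compact and disjoint from $\partial M$, hence a compact subset of the open manifold $M$, so $f|_M \in C_c^\infty(M)$; for the dimension count, by the convention \pref{eq:convention} any $f$ in a near-optimal subspace for $\hat{\lambda}_k(S)$ with $f|_M \equiv 0$ $\|S\|$-a.e.\ would force the corresponding Rayleigh quotient to be infinite, so restriction is injective on such a subspace.

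For the reverse inequality the new point, relative to the Neumann case, is that the Dirichlet eigenfunctions $\phi_1,\dots,\phi_k \in C^\infty(\bar M)$ vanish on $\partial M$ but are \emph{not} compactly supported in $M$, so they cannot be extended directly via the projection-and-cutoff construction of the previous lemma. I would instead first use the density of $C_c^\infty(M)$ in $H_0^1(M)$: fixing $\delta > 0$, take the $k$-dimensional subspace of $H_0^1(M)$ spanned by $\phi_1,\dots,\phi_k$ and perturb a basis slightly in $H_0^1$ to obtain a $k$-dimensional subspace $\Gamma^k \subset C_c^\infty(M)$ with $\sup_{g \in \Gamma^k \setminus\{0\}} \int|\nabla_M g|^2 \, d\mathrm{Vol}_M \big/ \int |g|^2 \, d\mathrm{Vol}_M < \gamma_k + \delta$, using continuity of the Rayleigh quotient on a fixed finite-dimensional space. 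Each $g \in \Gamma^k$ has $\supp g$ a compact subset of $M$, hence at positive distance from $\partial M$, and this compact set admits a tubular neighborhood in $\mathbb{R}^N$ on which the nearest-point projection $P$ onto $M$ is well-defined and smooth. With $\rho$ as in the proof of Lemma~\ref{le:lkforriemannian} and $\epsilon$ small enough that the slab $\{\,d(\cdot,M) < \epsilon\,\}$, restricted to a neighborhood of $\supp g$, stays inside that tubular neighborhood and away from $\partial M$, set $f(x) := g(P(x))\,\rho(d(x,M)/\epsilon)$ there and $f := 0$ elsewhere. Then $f \in C_c^\infty(\mathbb{R}^N \setminus \partial M) = C_c^\infty(\mathbb{R}^N \setminus \mathrm{set}\,\partial S)$, and since $\rho$ is flat at $0$ and $dP$ annihilates normal directions, on $M$ we have $f = g$ and $\nabla f = \nabla_M g$ (the full gradient, with no normal component). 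Extending a basis of $\Gamma^k$ in this way yields a $k$-dimensional subspace $\tilde{\Lambda}^k \subset C_c^\infty(\mathbb{R}^N \setminus \mathrm{set}\,\partial S)$ whose Rayleigh quotients against $\|S\| = \mathrm{Vol}_M$ coincide with those of $\Gamma^k$ against $\mathrm{Vol}_M$; hence $\hat{\lambda}_k(S) < \gamma_k + \delta$, and letting $\delta \downto 0$ gives $\hat{\lambda}_k(S) \le \gamma_k$. Combined with the first step, $\hat{\lambda}_k([|M|]) = \gamma_k$.

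The only genuine obstacle is the one just highlighted: unlike Neumann eigenfunctions, Dirichlet eigenfunctions are not compactly supported away from $\partial M$, so the verbatim extension argument of the previous two lemmas fails for them, and one must interpose an approximation by elements of $C_c^\infty(M)$ before extending. This costs an extra $\delta$, which is harmless since the entire argument is already a $\delta$-argument; every other ingredient — the comass/Rayleigh comparison $|\nabla_M(f|_M)| \le |\nabla f|$, the dimension count via the convention \pref{eq:convention}, and the identity $\|S\| = \mathrm{Vol}_M$ — is exactly as in the proofs above.
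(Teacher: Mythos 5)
Your proof is correct and follows the paper's two-step structure: $\gamma_k \le \hat{\lambda}_k(S)$ by restriction (with the same dimension-preservation argument via the convention \pref{eq:convention}), and $\hat{\lambda}_k(S) \le \gamma_k$ by taking a near-optimal $k$-dimensional subspace in $C_c^\infty(M)$ and extending each basis element via the projection-and-cutoff map $f_i(x) = g_i(P(x))\rho(d(x,M)/\epsilon)$, which keeps supports away from $\partial M$ because the $g_i$ already do. The one place you diverge expositionally is the reverse inequality: you pass through the Dirichlet eigenfunctions and then invoke density of $C_c^\infty(M)$ in $H_0^1(M)$ to perturb to a compactly supported subspace, whereas the paper never touches eigenfunctions at all — since its min-max formula for $\gamma_k$ is already stated as an infimum over $k$-dimensional subspaces of $C_c^\infty(M)$, it can simply take a $\delta$-near-optimal subspace in $C_c^\infty(M)$ directly and extend it. Your perturbation step is therefore a harmless extra layer rather than a genuinely different route, and it correctly identifies the reason (noncompact support of Dirichlet eigenfunctions) why the Neumann proof's literal extension of eigenfunctions does not carry over; the paper's phrasing buries that observation inside its choice of test-function class.
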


\begin{proof}
For the $k$th eigenvalue of $M$ (c.f. \cite[Chapter 6]{davies_spectral_1996}),
\begin{equation}
\gamma_k = \inf_{\Gamma^k \subset C_c^\infty(M) } \sup_{g \in \Gamma^k \backslash \{0\}} 
\frac{\int |\nabla_M g|^2 \, d\mathrm{Vol}_M}
     {\int |g|^2 \, d \mathrm{Vol}_M }.
\end{equation}
We can therefore apply the same arguments as in Lemma \ref{le:lkforriemannian} to show that $\gamma_k \leq \hat{\lambda}_k(S)$. 

For the opposite inequality, let $\delta > 0$ and let $\tilde{\Gamma}^k \subset C^\infty_c(\mathbb{R}^N \backslash \mathrm{set}\,(\partial S))$ such that
\begin{equation}
\sup_{g \in \tilde{\Gamma}^k \backslash \{0\}} \frac{ \int |\nabla_M g|^2 \, d \mathrm{Vol}_M} 
                           { \int |g|^2 \, d \mathrm{Vol}_M  } < \gamma_k + \delta.
\end{equation}
Let $\{g_1, \dots, g_k \}$ be an orthogonal basis of $\tilde{\Gamma}^k$ with respect to the $L^2$ inner product on $M$. 
Every $g_i$ is compactly supported. 
There exists a neighborhood $U$ of the support of $g_i$ in $\mathbb{R}^N$ on which the shortest-distance projection onto $M$ is well defined and smooth.
We can then pick $\epsilon > 0$ so small that definining $f_i$ by
\begin{equation}
f_i(x) := 
\begin{cases}
g_i( P(x) ) \rho( d(x,M) / \epsilon  ),& x \in U, \\
0 & \text{ otherwise },
\end{cases}
\end{equation}
with again $\rho$ as in Lemma \ref{le:lkforriemannian}, 
it is guaranteed that $f_i \in C^\infty(\mathbb{R}^N \backslash \mathrm{set}\, \partial S)$. 
On $M$, $f_i \equiv g_i$ and $\nabla_M g_i = \nabla f_i$.
Thus, if we define $\tilde{\Lambda}^k := \mathrm{span} \{f_1, \dots, f_k \}$, then
\begin{equation}
\hat{\lambda}_k (S) 
\leq \sup_{f \in \tilde{\Lambda}^k \backslash \{0\}} 
\frac{\int |\nabla f|^2 \, d \mathrm{Vol}_M}
     {\int |f|^2 \, d\mathrm{Vol}_M}
= \sup_{g \in \tilde{\Gamma}^k \backslash \{0\}}
\frac{\int |\nabla_M g|^2 \, d \mathrm{Vol}_M}
     {\int |g|^2 \, d\mathrm{Vol}_M} < \gamma_k + \delta.
\end{equation}
Because $\delta>0$ was arbitrary, we find $\lambda_k = \gamma_k$.
\end{proof}

\section{Upper semicontinuity}
\label{se:uppersemicontinuity}

We will now show that under a certain condition on the currents that effectively prohibits cancellation of mass in the limit, the eigenvalues of the Laplace operator are upper semicontinuous. 

\begin{Theorem}
\label{th:currentneumann}
Let $T_i$ ($i=1,2, \dots$) and $T$ be compactly supported $n$-currents in $\mathbb{R}^N$ with finite mass and let $T_i \rightharpoonup T$ weakly and $\|T_i\|(\mathbb{R}^N) \to \|T\|(\mathbb{R}^N)$.
Then the functions $\lambda_k$ as defined in (\ref{eq:minmaxforlk}) are upper semicontinuous. 
That is, for every $k \in \mathbb{N}$,
\begin{equation}
\limsup_{i \to \infty} \lambda_k(T_i) \leq \lambda_k( T ).
\end{equation}
\end{Theorem}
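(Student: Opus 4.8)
The approach I would take is to feed a near-optimal subspace for $\lambda_k(T)$ into the min--max characterization of $\lambda_k(T_i)$ and to check that the relevant Rayleigh quotients pass to the limit. We may assume $\lambda_k(T)<\infty$, since otherwise there is nothing to prove. Fix $\delta>0$ and, by (\ref{eq:minmaxforlk}), choose a $k$-dimensional subspace $\Lambda^k\subset C^\infty(\mathbb{R}^N)$ with
\[
\sup_{f\in\Lambda^k\setminus\{0\}}\frac{\int|\nabla f|^2\,d\|T\|}{\int|f|^2\,d\|T\|}<\lambda_k(T)+\delta.
\]
Since $\|T\|$ is compactly supported, multiplying every element of $\Lambda^k$ by one fixed cutoff function $\chi\in C_c^\infty(\mathbb{R}^N)$ with $\chi\equiv 1$ on a neighbourhood of $\supp\|T\|$ leaves all of these Rayleigh quotients unchanged, so I would assume $\Lambda^k\subset C_c^\infty(\mathbb{R}^N)$. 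In particular, for a basis $f_1,\dots,f_k$ of $\Lambda^k$ the functions $|f_a|^2$ and $\langle\nabla f_a,\nabla f_b\rangle$ ($1\le a,b\le k$) all belong to $C_c(\mathbb{R}^N)$.

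The analytic heart of the argument is the claim that $\|T_i\|\rightharpoonup\|T\|$ as Radon measures, tested against $C_c(\mathbb{R}^N)$. Lower semicontinuity of the mass gives $\liminf_i\|T_i\|(O)\ge\|T\|(O)$ for every open $O$. Writing $\|T_i\|(C)=\|T_i\|(\mathbb{R}^N)-\|T_i\|(\mathbb{R}^N\setminus C)$ and combining the hypothesis $\|T_i\|(\mathbb{R}^N)\to\|T\|(\mathbb{R}^N)$ with lower semicontinuity applied to the open set $\mathbb{R}^N\setminus C$ yields $\limsup_i\|T_i\|(C)\le\|T\|(C)$ for every closed $C$, and hence, by outer regularity of $\|T\|$, also $\limsup_i\|T_i\|(K)\le\|T\|(K)$ for every compact $K$. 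These two one-sided estimates give $\int h\,d\|T_i\|\to\int h\,d\|T\|$ for all $h\in C_c(\mathbb{R}^N)$ by a standard layer-cake (portmanteau) argument. It is precisely here that the no-volume-loss hypothesis enters: weak convergence of the currents by itself controls the scalar measures $\|T_i\|$ only from below.

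Applying this to $h=|f_a|^2$ and $h=\langle\nabla f_a,\nabla f_b\rangle$, the symmetric matrices $A_i:=\big(\int\langle\nabla f_a,\nabla f_b\rangle\,d\|T_i\|\big)_{a,b}$ and $B_i:=\big(\int f_af_b\,d\|T_i\|\big)_{a,b}$ converge, as $i\to\infty$, to $A:=\big(\int\langle\nabla f_a,\nabla f_b\rangle\,d\|T\|\big)_{a,b}$ and $B:=\big(\int f_af_b\,d\|T\|\big)_{a,b}$. The matrix $B$ is positive definite, for if some nonzero linear combination $\sum_ac_af_a$ vanished $\|T\|$-almost everywhere, then by the convention (\ref{eq:convention}) its Rayleigh quotient would equal $\infty$, contradicting the choice of $\Lambda^k$ (this is where $\lambda_k(T)<\infty$ enters). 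Hence $B_i$ is positive definite for all large $i$, so the Rayleigh quotient on $\Lambda^k$ with respect to $\|T_i\|$ is finite on all of $\Lambda^k\setminus\{0\}$ and
\[
\sup_{f\in\Lambda^k\setminus\{0\}}\frac{\int|\nabla f|^2\,d\|T_i\|}{\int|f|^2\,d\|T_i\|}=\max\{\lambda:\det(A_i-\lambda B_i)=0\},
\]
the largest generalized eigenvalue of the pencil $(A_i,B_i)$, which depends continuously on $(A_i,B_i)$ in a neighbourhood of $(A,B)$ because $B\succ0$. Therefore
\[
\limsup_{i\to\infty}\lambda_k(T_i)\le\lim_{i\to\infty}\sup_{f\in\Lambda^k\setminus\{0\}}\frac{\int|\nabla f|^2\,d\|T_i\|}{\int|f|^2\,d\|T_i\|}=\sup_{f\in\Lambda^k\setminus\{0\}}\frac{\int|\nabla f|^2\,d\|T\|}{\int|f|^2\,d\|T\|}<\lambda_k(T)+\delta,
\]
and letting $\delta\downarrow0$ proves the theorem.

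The step I expect to be the main obstacle is the passage from weak convergence of currents to genuine two-sided weak-$*$ convergence of the mass measures $\|T_i\|$: the function $\lambda_k$ only sees the scalar measure $\|S\|$, whereas weak convergence of currents is a statement about pairings with smooth $n$-forms and, combined with lower semicontinuity of mass, controls $\|T_i\|$ from one side only; forcing both the numerator and the denominator of the Rayleigh quotient to converge is exactly what the hypothesis $\|T_i\|(\mathbb{R}^N)\to\|T\|(\mathbb{R}^N)$ is for. A lesser point that still needs care is the reduction to a compactly supported subspace and the positive definiteness of $B$, both of which rely on the convention (\ref{eq:convention}) and on first disposing of the case $\lambda_k(T)=\infty$.
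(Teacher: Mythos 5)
Your proof is correct and follows essentially the same approach as the paper: restrict to a compactly supported near-optimal subspace $\Lambda^k$ for $T$, deduce from lower semicontinuity of mass and the total-mass hypothesis that $\int h\,d\|T_i\|\to\int h\,d\|T\|$ for $h\in C_c(\mathbb{R}^N)$ (the paper's Lemma \ref{le:weakconvergence}), and pass the Rayleigh quotients on $\Lambda^k$ to the limit. The only minor differences are your choice of the standard open/closed portmanteau argument for the measure convergence (the paper proves the same lemma by a cube decomposition but explicitly acknowledges that the standard argument works) and your explicit generalized-eigenvalue/matrix justification for the uniform convergence of the supremum over $\Lambda^k\setminus\{0\}$, a point the paper leaves implicit.
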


We will use the following lemma, which shows weak convergence for the mass measures associated to the currents. 

\begin{Lemma}
\label{le:weakconvergence}
Assume that $T_i$ ($i = 1,2, \dots$) and $T$ are $n$-currents on $\mathbb{R}^N$ with $T_i \rightharpoonup T$ weakly and $\|T_i\|(\mathbb{R}^N) \to \|T\|(\mathbb{R}^N)$. Then for all $\phi \in C_c(\mathbb{R}^N)$,
\begin{equation}
\lim_{i \to \infty} \int \phi \, d \|T_i\| = \int \phi \, d \| T \|.
\end{equation}
\end{Lemma}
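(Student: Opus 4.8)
The plan is to combine the lower semicontinuity of mass on open sets with the convergence of total masses, and then use a standard measure-theoretic argument to upgrade convergence against compactly supported differential forms to convergence against arbitrary $\phi \in C_c(\mathbb{R}^N)$. The underlying principle is that a sequence of Radon measures $\mu_i$ converging weakly-$*$ on open sets (i.e. $\liminf \mu_i(O) \geq \mu(O)$) and satisfying $\mu_i(\mathbb{R}^N) \to \mu(\mathbb{R}^N)$ must in fact converge weakly-$*$ as measures, i.e. $\int \phi \, d\mu_i \to \int \phi \, d\mu$ for all $\phi \in C_c$.

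First I would record the two facts already available for the mass measures $\|T_i\|$ and $\|T\|$. From weak convergence $T_i \rightharpoonup T$ and the definition of $\|\cdot\|(O)$ as a supremum of $T(\phi)$ over forms supported in $O$, one gets the lower semicontinuity stated in the excerpt: for every open $O$, $\|T\|(O) \leq \liminf_i \|T_i\|(O)$. Second, we are given $\|T_i\|(\mathbb{R}^N) \to \|T\|(\mathbb{R}^N)$. Applying lower semicontinuity to an open set $O$ and simultaneously to... — more precisely, I would take a bounded open set $O$ and observe that, since all currents are compactly supported (so their masses live in a fixed large ball $B$, which I may take to be open), lower semicontinuity applied to $O$ and the convergence of total masses force $\limsup_i \|T_i\|(O) \leq \|T\|(\bar{O})$: indeed $\|T_i\|(O) = \|T_i\|(B) - \|T_i\|(B \setminus \bar{O}) \le \|T_i\|(B) - \|T_i\|(B \setminus \bar{O})$ and $B \setminus \bar{O}$ is open, so $\liminf_i \|T_i\|(B \setminus \bar{O}) \geq \|T\|(B \setminus \bar{O})$, while $\|T_i\|(B) \to \|T\|(B)$. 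Hence for open $O$ we get both $\|T\|(O) \le \liminf_i\|T_i\|(O)$ and $\limsup_i \|T_i\|(\bar O) \le \|T\|(\bar O)$ (the latter after swapping roles of $O$ and its complement). This is exactly the hypothesis of the portmanteau-type criterion for weak-$*$ convergence of Radon measures.

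To conclude, I would fix $\phi \in C_c(\mathbb{R}^N)$ and reduce to the case $\phi \geq 0$ by splitting $\phi = \phi^+ - \phi^-$. For $\phi \geq 0$ with $\|\phi\|_\infty \leq 1$, write $\int \phi \, d\mu = \int_0^1 \mu(\{\phi > t\}) \, dt$ via the layer-cake formula. The sets $\{\phi > t\}$ are open and $\{\phi \geq t\}$ are compact, and $\{\phi > t\}$ and $\{\phi \ge t\}$ differ by a $\mu$-null set (hence coincide in measure) for all but countably many $t$, since $t \mapsto \mu(\{\phi \ge t\})$ is monotone. For such $t$, the two one-sided estimates from the previous paragraph give $\limsup_i \|T_i\|(\{\phi > t\}) \le \|T\|(\{\phi \geq t\}) = \|T\|(\{\phi > t\})$ and $\liminf_i \|T_i\|(\{\phi > t\}) \ge \|T\|(\{\phi > t\})$, so $\|T_i\|(\{\phi > t\}) \to \|T\|(\{\phi > t\})$ for a.e.\ $t \in (0,1)$. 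Since all these quantities are bounded by $\|T_i\|(B) \le C$, dominated convergence in the variable $t$ yields $\int \phi \, d\|T_i\| \to \int \phi \, d\|T\|$, and scaling removes the normalization $\|\phi\|_\infty \le 1$.

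I expect the main obstacle to be purely bookkeeping: making sure the compact-support hypothesis is genuinely used to pass from open-set lower semicontinuity to a two-sided estimate (without it, mass could escape to infinity and the conclusion fails), and carefully handling the boundary sets $\{\phi = t\}$ via the countability-of-atoms argument so that the layer-cake integrand converges pointwise a.e. Neither step is deep, but the logical structure — lower semicontinuity plus mass conservation gives weak-$*$ convergence of the measures — is the crux and should be stated cleanly, perhaps as an intermediate claim before the layer-cake computation.
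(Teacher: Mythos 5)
Your proposal is correct, but it takes a genuinely different route from the paper's. You establish the full portmanteau criterion: lower semicontinuity on open sets (from weak convergence of currents) plus, using the total-mass convergence, upper semicontinuity on closed sets; you then apply the layer-cake formula $\int\phi\,d\mu=\int_0^1\mu(\{\phi>t\})\,dt$ together with the observation that $\|T\|(\{\phi=t\})=0$ for all but countably many $t$, and finish with dominated convergence in $t$. The paper instead fixes a grid of small cubes whose grid lines are $\|T\|$-null (the same countability-of-atoms idea, but applied to coordinate hyperplanes rather than to level sets of $\phi$), proves $\|T_i\|(Q^b)\to\|T\|(Q^b)$ on each cube, and concludes with a Riemann-sum estimate using uniform continuity of $\phi$. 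Both are standard ways of upgrading convergence on a good family of sets to weak-$*$ convergence of measures; yours isolates the abstract portmanteau statement, which is cleaner and more reusable, while the paper's is more self-contained and elementary in spirit.

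One small inaccuracy to flag: you invoke a fixed ball $B$ containing all the supports, justified by "all currents are compactly supported." The lemma does not assume the currents are compactly supported, and even compact supports need not lie in a common ball. This is easily repaired by replacing $B$ with $\mathbb{R}^N$ throughout: since $\mathbb{R}^N\setminus\bar O$ is open, $\liminf_i\|T_i\|(\mathbb{R}^N\setminus\bar O)\geq\|T\|(\mathbb{R}^N\setminus\bar O)$, and combined with $\|T_i\|(\mathbb{R}^N)\to\|T\|(\mathbb{R}^N)$ this gives $\limsup_i\|T_i\|(\bar O)\leq\|T\|(\bar O)$ directly. The rest of the layer-cake argument only uses compactness of $\mathrm{supp}\,\phi$, which is given by $\phi\in C_c(\mathbb{R}^N)$, and boundedness of $\sup_i\|T_i\|(\mathbb{R}^N)$, which follows from the assumed convergence of total masses.
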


The statement of the lemma is slightly nonstandard, but the conclusion follows directly from the lower semicontinuity of the mass and the convergence of the total mass (c.f. \cite[Section 1.9]{evans_measure_1991}). For completeness of exposition we include a different proof at the end of this section.

With the lemma at hand, we can prove Theorem \ref{th:currentneumann}.

\begin{proof}[Proof of Theorem \ref{th:currentneumann}]
Let $\delta > 0$. By the definition of $\lambda_k$ in (\ref{eq:minmaxforlk}), we can fix a subspace $\Lambda^k \subset C^\infty(\mathbb{R}^N)$ such that
\begin{equation}
\label{eq:estimateforLk}
\sup_{f \in \Lambda^k \backslash \{ 0 \} } \frac{\int |\nabla f|^2 \, d \|T\|}{\int |f|^2 \, d\|T\|} \leq \lambda_k(T) + \delta.
\end{equation}
In fact, we may assume that $\Lambda^k \subset C_c^\infty(\mathbb{R}^N)$, since $T$ is assumed to have compact support.
By Lemma \ref{le:weakconvergence}, we have for $i$ large enough
\begin{equation}
\sup_{f \in \Lambda^k \backslash \{ 0 \}} \frac{\int |\nabla f|^2 \, d\|T_i\|}{ \int|f|^2 \, d\|T_i\| }
\leq \sup_{f \in \Lambda^k \backslash \{0\} } \frac{\int |\nabla f|^2 \, d \|T\|}{\int |f|^2 \, d\|T\|} + \delta.
\end{equation}
Therefore, for $i$ large enough,
\begin{equation}
\lambda_k(T_i) \leq \lambda_k(T) + 2 \delta,
\end{equation}
and
\begin{equation}
\limsup_{i \to \infty} \lambda_k(T_i) \leq \lambda_k(T) + 2 \delta.
\end{equation}
The theorem follows by taking $\delta \downarrow 0$.
\end{proof}

\begin{Theorem}
\label{th:currentdirichlet}
Let $T_i$ ($i = 1, 2 , \dots$) and $T$ be normal $n$-currents in $\mathbb{R}^N$. 
As in the previous theorem, assume that $T_i \rightharpoonup T$ weakly and $\|T_i\|(\mathbb{R}^N) \to \|T\|(\mathbb{R}^N)$. 
Additionally, we assume that for every $\epsilon > 0$, eventually
$\mathrm{set}\,(\partial T_i) \subset (\mathrm{set}\,(\partial T))_\epsilon$.
Then
\begin{equation}
\limsup_{i \to \infty} \hat{\lambda}_k(T_i) \leq \hat{\lambda}_k( T ).
\end{equation}
\end{Theorem}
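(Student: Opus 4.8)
The plan is to follow the proof of Theorem~\ref{th:currentneumann} essentially verbatim; the one genuinely new ingredient is a device that turns a test subspace admissible for $\hat\lambda_k(T)$ into one admissible for $\hat\lambda_k(T_i)$ for all large~$i$. One may assume $\hat\lambda_k(T)<\infty$, since otherwise there is nothing to prove. Given $\delta>0$, I would first select, using the definition of $\hat\lambda_k$, a $k$-dimensional subspace $\Lambda^k\subset C_c^\infty(\mathbb{R}^N\setminus\mathrm{set}\,\partial T)$ whose Rayleigh quotient with respect to $\|T\|$ is at most $\hat\lambda_k(T)+\delta$, fix a basis $f_1,\dots,f_k$ of it, and set $K:=\bigcup_{j}\supp f_j$, a compact set disjoint from $\mathrm{set}\,\partial T$.

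The heart of the argument is the transfer of admissibility. Because $K$ is compact and lies in the open set $\mathbb{R}^N\setminus\mathrm{set}\,\partial T$, one can pick $\epsilon_0>0$ with $K\cap(\mathrm{set}\,\partial T)_{\epsilon_0}=\emptyset$. The extra hypothesis then gives $\mathrm{set}\,\partial T_i\subset(\mathrm{set}\,\partial T)_{\epsilon_0/2}$ for all large~$i$, so $\mathrm{dist}(K,\mathrm{set}\,\partial T_i)\ge\epsilon_0/2>0$ and every $f\in\Lambda^k$ belongs to $C_c^\infty(\mathbb{R}^N\setminus\mathrm{set}\,\partial T_i)$. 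Hence, for such~$i$, $\Lambda^k$ is an admissible competitor in the infimum defining $\hat\lambda_k(T_i)$, giving
\[
\hat\lambda_k(T_i)\le\sup_{f\in\Lambda^k\setminus\{0\}}\frac{\int|\nabla f|^2\,d\|T_i\|}{\int|f|^2\,d\|T_i\|}.
\]
From here the reasoning is exactly that of Theorem~\ref{th:currentneumann}: Lemma~\ref{le:weakconvergence}, applied to the compactly supported continuous functions $f_jf_l$ and $\nabla f_j\cdot\nabla f_l$, shows that the Gram matrices $A^{(i)}_{jl}:=\int\nabla f_j\cdot\nabla f_l\,d\|T_i\|$ and $B^{(i)}_{jl}:=\int f_jf_l\,d\|T_i\|$ converge entrywise to $A_{jl}:=\int\nabla f_j\cdot\nabla f_l\,d\|T\|$ and $B_{jl}:=\int f_jf_l\,d\|T\|$; the matrix $B$ is positive definite because $\Lambda^k$ has finite Rayleigh quotient for $\|T\|$ (convention~(\ref{eq:convention})), so $B^{(i)}$ is positive definite for large~$i$ and the supremum above, being the largest generalized eigenvalue of the pencil $A^{(i)}-\lambda B^{(i)}$, depends continuously on $(A^{(i)},B^{(i)})$ and therefore converges to the largest generalized eigenvalue of $A-\lambda B$, which is $\le\hat\lambda_k(T)+\delta$. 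This yields $\limsup_i\hat\lambda_k(T_i)\le\hat\lambda_k(T)+\delta$, and letting $\delta\downarrow0$ finishes the proof.

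I expect the only real obstacle to be the transfer step, and it is exactly there that the boundary hypothesis is indispensable. One has to pass from ``$\supp f$ is disjoint from $\mathrm{set}\,\partial T$'' to ``$\supp f$ is at a fixed positive distance from $\mathrm{set}\,\partial T$'', which uses compactness of $\supp f$ together with $\mathrm{set}\,\partial T$ being closed (so that $\mathbb{R}^N\setminus\mathrm{set}\,\partial T$ is the open set on which the test functions are defined), and one then needs the one-sided Hausdorff control $\mathrm{set}\,\partial T_i\subset(\mathrm{set}\,\partial T)_\epsilon$ to keep $\mathrm{set}\,\partial T_i$ out of the region where the chosen test functions live. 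With only weak convergence and mass convergence (as in the Neumann case) this step collapses — the $\mathrm{set}\,\partial T_i$ could creep into the supports of the test functions, destroying admissibility — which is precisely what Example~\ref{ex:dirichlet} exploits to show the conclusion can fail. The remaining point, that $\Lambda^k$ still has a positive-definite $\|T_i\|$-Gram matrix for large~$i$ (equivalently, that no nonzero element of $\Lambda^k$ vanishes $\|T_i\|$-almost everywhere), is the situation already dealt with in Theorem~\ref{th:currentneumann} and poses no difficulty.
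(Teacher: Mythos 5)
Your proposal is correct and follows essentially the same route as the paper: fix a near-optimal $\Lambda^k\subset C_c^\infty(\mathbb{R}^N\setminus\mathrm{set}\,\partial T)$, use compactness of the union of supports of a basis together with the one-sided Hausdorff hypothesis to show $\Lambda^k$ is eventually admissible for $\hat\lambda_k(T_i)$, then invoke Lemma~\ref{le:weakconvergence} as in Theorem~\ref{th:currentneumann}. Your Gram-matrix/generalized-eigenvalue argument merely spells out the step that the paper leaves implicit (why Lemma~\ref{le:weakconvergence} gives convergence of the supremum of Rayleigh quotients over a fixed finite-dimensional space), so this is an expanded but not a different proof.
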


\begin{proof}
We note that we can choose $\Lambda^k \subset C^\infty_c(\mathbb{R}^N \backslash \mathrm{set}\, \partial T)$ such that 
\begin{equation}
\sup_{f \in \Lambda^k \backslash\{0\} } \frac{\int |\nabla f|^2 \, d\| T \|}
                        {\int |f|^2 \, d \|T\|} 
\leq \hat{\lambda}_k(T) + \delta.
\end{equation}
By the hypothesis and the fact that the collection of $f \in \Lambda_k$ is compact to a scaling factor, there is an $\epsilon>0$ such that for all $f \in \Lambda^k$, 
\begin{equation}
(\mathrm{set}\, \partial T)_\epsilon \cap \mathrm{supp} f = \emptyset.
\end{equation}
Because of our assumption on $\mathrm{set}\, \partial T_i$, for $i$ large enough 
\begin{equation}
\mathrm{set}\, \partial T_i \subset (\mathrm{set} \, \partial T)_{\epsilon / 2},
\end{equation}
so $\Lambda^k$ is also a $k$-dimensional subspace of $C^\infty_c(\mathbb{R}^N \backslash \mathrm{set}\,(\partial T_i))$ for $i$ large enough. The remainder of the proof is the same as above.
\end{proof}

Since the functions $\lambda_k$ correspond to the eigenvalues of the Laplace operator when the currents are induced by smooth manifolds, and flat convergence implies weak convergence, we immediately obtain Theorems \ref{co:semimanifoldneumann} and \ref{co:semimanifolddirichlet} in the introduction.

We will conclude this section with a proof of Lemma \ref{le:weakconvergence}. We prove the lemma by decomposing $\mathbb{R}^N$ into some well-chosen collection of small-enough cubes $Q$ so that actually $\lim_{i\to\infty} \|T_i\|(Q) = \|T\|(Q)$.

\begin{proof}[Proof of Lemma \ref{le:weakconvergence}]
Let $\epsilon > 0$. Since $\phi$ is uniformly continuous, we can pick $\delta>0$ so small that if $|x - y | < 2\sqrt{N} \delta$, then $|\phi(x) - \phi(y)|< \epsilon$. 

We pick $a \in [0,\delta)^N$ such that $\|T\|(G) = 0$, where
\begin{equation}
G := \{ x \in \mathbb{R}^N \, | \, x_m \in a_m + \delta \mathbb{Z} \text{ for some } m = 1, \dots, N \}.
\end{equation}
We can find such an $a$ because for every $m= 1, \dots, N$, there are only countably many $t\in \mathbb{R}$ such that
\begin{equation}
\|T \| ( \{x \in \mathbb{R}^N \, | \, x_m = t \} ) > 0.
\end{equation}
For $b\in \mathbb{Z}^N$ let $Q^{b}$ denote the cube
\begin{equation}
Q^b := \{ x \in \mathbb{R}^N \, | \, \delta b_1 < x_1 - a_1 < \delta (b_1 + 1) , 
\dots, \delta b_N < x_N - a_N < \delta (b_N + 1) \}.
\end{equation}
Set 
\begin{equation}
O := \bigcup_{b \in \mathbb{Z}^N} Q^{b} = \mathbb{R}^N \backslash G.
\end{equation}
Since $O$ is open, by the lower-semicontinuity of the mass,
\begin{equation}
\liminf_{i \to \infty} \|T_i\|(O) \geq \|T\|(O).
\end{equation}
Our choice of $a$ guarantees that $\|T\|(G) = 0$. 
By our assumption of convergence of the total mass of the space,
\begin{equation}
\limsup_{i \to \infty} \|T_i \| (O) \leq \lim_{i\to\infty} \| T_i \|(\mathbb{R}^N) 
= \|T\|(\mathbb{R}^N) = \|T\|(O).
\end{equation}
So $\lim_{i \to \infty} \|T_i\|(O) = \|T\|(O)$ and $\lim_{i \to \infty} \|T_i\|(G) = 0$. It immediately follows that  
\begin{equation}
\lim_{i \to \infty} \|T_i\|(Q^b) = \|T\|(Q^b),
\end{equation}
for any $b\in \mathbb{Z}^N$.

Hence,
\begin{equation}
\begin{split}
\int \phi \, d \|T_i\| - \int \phi \, d\| T\| &= \sum_{b \in \mathbb{Z}^N} 
\left( \int_{Q^{b}} \phi \, d \|T_i\| -  \int_{Q^b} \phi \, d \|T\|\right) + \int_G \phi \, d\|T_i\|\\
&= \sum_{b\in \mathbb{Z}^N} \left( \phi(x^b) \|T_i\|(Q^{b}) - \phi(y^b) \|T\|(Q^{b}) \right) + \int_G \phi \, d\|T_i\|
\end{split}
\end{equation}
by the mean value theorem, for some $x^{b}$ and $y^{b}$ in $Q^{b}$. Consequently,
\begin{equation}
\begin{split}
\left| \int \phi \, d \|T_i\| - \int \phi \, d\| T\| \right| 
&\leq \sum_{b\in \mathbb{Z}^N} \left|\phi(x^{b})- \phi(y^{b}) \right| \|T_i\|(Q^{b}) \\
&\qquad + \sum_{b \in \mathbb{Z}^N} |\phi(y^{b})| \left| \|T_i\|(Q^{b}) - \|T\|(Q^{b}) \right|\\
& \qquad + \sup |\phi| \|T_i\|(G) \\
& \leq \epsilon \| T_i \| (O) + \sum_{b\in \mathbb{Z}^N} \sup_{y \in Q^b} |\phi(y)| \left| \|T_i\|(Q^{b}) - \|T\|(Q^{b}) \right| + \sup |\phi| \|T_i\|(G).
\end{split}
\end{equation}
Since $\phi$ is compactly supported, only finitely many terms contribute to the sum. 
Hence,
\begin{equation}
\limsup_{i \to \infty} \left| \int \phi \, d \|T_i\| - \int \phi \, d\| T\| \right| \leq \epsilon \|T\|(O).
\end{equation}
Since $\epsilon$ was arbitrary, the lemma is proved.
\end{proof}

\section{Examples}
\label{se:examples}

The following example shows that if we lift the condition that $\|T_i\|(\mathbb{R}^N) \to \|T\|(\mathbb{R}^N)$, the lower semicontinuity fails in general.

\begin{Example}
\label{ex:nototalmassconvergence}
Consider first for $\epsilon > 0$ the functions $x_\epsilon:[0,4] \to \mathbb{R}^2$ given by

\begin{equation}
\label{eq:defbarxeps}
\bar{x}_\epsilon(t) = 
\begin{cases}
\left( \cos\frac{2 \pi (t+\epsilon)}{1 + 2 \epsilon} - 3
       , \sin \frac{2 \pi (t+\epsilon)}{1 + 2\epsilon}  \right), 
& t \in [0,1], \\
\left( (2-t) \left(\cos\frac{2\pi (1+\epsilon)}{1 + 2 \epsilon} - 3\right) 
+ (t-1)\left( 3 - \cos\frac{2\pi \epsilon}{1 + 2 \epsilon} \right)
, - \sin \frac{2\pi \epsilon}{1 + 2 \epsilon } \right),
& t \in [1,2], \\
\left( 3 - \cos \frac{2 \pi (t-2 + \epsilon) }{1 + 2 \epsilon},
- \sin \frac{2\pi (t-2 + \epsilon)}{1 + 2 \epsilon} \right), 
& t \in [2,3], \\
\left( (4-t) \left(3 - \cos\frac{2 \pi (1+\epsilon)}{1+2\epsilon}\right) 
+ (t-3) \left( \cos \frac{2\pi \epsilon}{1 + 2\epsilon} - 3\right),
 \sin \frac{2 \pi \epsilon}{1+ 2\epsilon}  \right),
& t \in [3,4].
\end{cases}
\end{equation}
We sketched the image of $\bar{x}_\epsilon$ in Figure \ref{fig:Teps}. Since these functions are not smooth, we consider instead the functions $x_\epsilon = \bar{x}_\epsilon * \rho_{\epsilon^2}$, where $\rho_{\epsilon^2} = \epsilon^{-2} \rho(x / \epsilon^2)$, and $\rho \in C_c^\infty(\mathbb{R})$ is a standard, symmetric mollifier. For fixed $\epsilon > 0$, the functions $x_\epsilon$ parametrize a smooth one-dimensional Riemannian manifold $M_\epsilon$ in $\mathbb{R}^2$, with the orientation induced by the parametrization, and therefore they induce currents $T_\epsilon = [| M_\epsilon |]$. 

\begin{figure}[h]
  \centering
  \includegraphics[width=12cm]{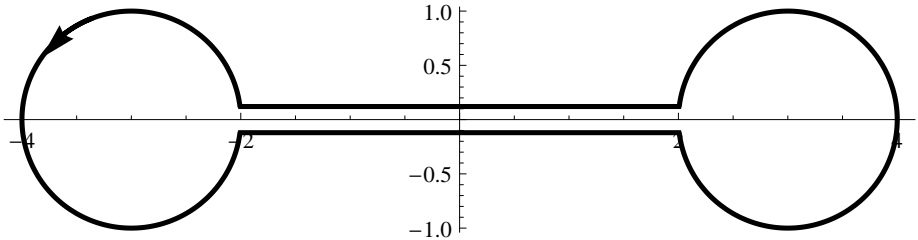}
  \caption{Image of $\bar{x}_\epsilon$, as defined in (\ref{eq:defbarxeps}), with $\epsilon=0.02$}
  \label{fig:Teps}
\end{figure}

As $\epsilon \downarrow 0$, $T_\epsilon \rightharpoonup T$ weakly as currents, where $T$ is the sum of a current induced by a unit circle centered at $(-3,0)$, and a current induced by a unit circle centered at $(3,0)$, both with counterclockwise orientation. 

The eigenvalues of the Laplace operator on a compact one-dimensional manifold parametrized by a smooth Jordan curve are $-(2 \pi \lfloor k/2 \rfloor / L)^2$, for $k = 1, 2$, where $L$ is the total arclength of the Jordan curve.

If a Riemannian manifold $M$ can be divided into two disjoint components $M_1$ and $M_2$, the eigenvalues of $M$ are the eigenvalues of $M_1$ together with the eigenvalues of $M_2$, where eigenvalues are repeated according to multiplicity.

Going back to our example, this implies that
\begin{equation}
\lambda_k(T_\epsilon) 
= \left(\frac{2 \pi \lfloor k / 2 \rfloor}{L_\epsilon}\right)^2, \qquad k = 1, 2, \dots,
\end{equation}
with $L_\epsilon$ the total length of $M_\epsilon$, and
\begin{equation}
\lambda_k(T)
= \left(\frac{2 \pi \lfloor (k+1) / 4 \rfloor}{2\pi}\right)^2, \qquad k = 1, 2, \dots.
\end{equation}

In particular, with Lemma \ref{le:lkforriemannian}, because $L_\epsilon$ is uniformly bounded above,

\begin{equation}
\limsup_{\epsilon \downarrow 0} \lambda_2(T_\epsilon) > 0 = \lambda_2(T),
\end{equation}
showing that in this case, the function $\lambda_2$ is not upper semicontinuous.
\end{Example}

The next example shows that without further assumptions, we cannot expect the eigenvalues of the Laplace operator on the manifolds to behave continuously under flat convergence of the induced currents.

\begin{Example}
\label{ex:surfacerevexample}

If we revolve the parametric curve of $x_\epsilon(t)$, $t\in[1/2, 5/2]$ around the $x$-axis, we obtain two-dimensional manifolds $N_\epsilon$ whose induced currents $S_\epsilon$ provide an example that shows that in general, we cannot expect continuity of the eigenvalues. 
Beale \cite{beale_scattering_1973} and Fukaya \cite{fukaya_collapsing_1987} studied similar examples. 
Their results imply that
\begin{equation}
\lim_{\epsilon \downarrow 0} \lambda_3(S_\epsilon) = \left( \frac{\pi}{4} \right)^2,
\end{equation}
the negative of the first eigenvalue of the Laplace operator on $[-2,2]$ with Dirichlet boundary conditions. 
Note that $S_\epsilon \rightharpoonup S$ weakly, where $S$ is the current induced by two unit spheres, one centered at $(-3,0,0)$ and one centered at $(3,0,0)$. 
Consequently,
\begin{equation}
\lambda_3(S) = 2 > \left(\frac{\pi}{4} \right)^2 = \lim_{\epsilon \downarrow 0} \lambda_3(S_\epsilon).
\end{equation}
The idea is that on $S_\epsilon$, the first eigenvalue will be zero, associated with a constant eigenfunction. When $\epsilon$ gets small, the second eigenvalue will be close to zero as well, and the eigenfunction will be almost constant on the two spheres, with a transition region in between. Finally, the third eigenfunction will be approximately equal to $\cos(\pi x / 4 ) / \sqrt{\epsilon}$ on the tube, and zero on the two spheres.
\end{Example}

We now consider an example in which currents induced by some Riemannian manifolds with boundary converge, but the associated Dirichlet problems do not.

\begin{Example}
\label{ex:dirichlet}
We consider manifolds $M_i \subset \mathbb{R}^2$, 
\begin{equation}
\label{eq:defMi}
M_i := [0,1]^2 - \bigcup_{x \in 2^{-i} \mathbb{Z}^2} B(x, R_0(i)),
\end{equation}
with $R_0:\mathbb{N} \to \mathbb{R}_+$ a function that we will choose later.
In Figure \ref{fig:swisscheese} we sketched $M_2$. 
As we show below, we can choose $R_0(i)$ such that the eigenvalues for the Laplace operator on $M_i$ blow up to $\infty$, but as currents, $[|M_i|] \rightharpoonup [|M|]$ weakly with $M = [0,1]^2$. 
Consequently, in this case there is no upper semicontinuity of the eigenvalues of the Laplace operator.

\begin{figure}[h]
  \centering
  \includegraphics[width=6cm]{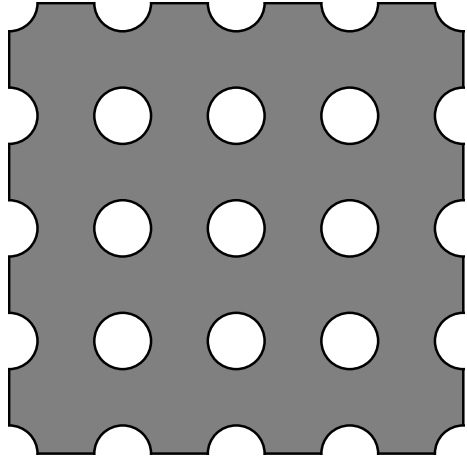}
  \caption{Image of $M_i$ as defined in (\ref{eq:defMi}), with $i=2$.}
  \label{fig:swisscheese}
\end{figure}
\begin{figure}[h]
  \centering
  \includegraphics[width=6cm]{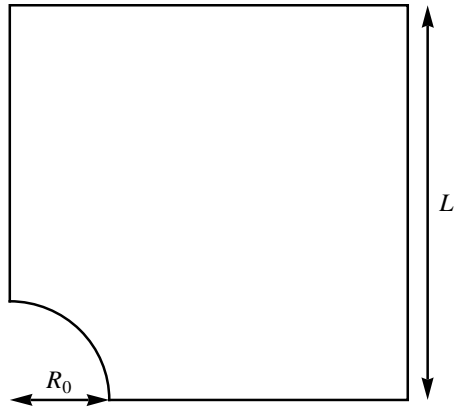}
  \caption{Image of $\Omega_L$ as defined in (\ref{eq:defOmegaL}).}
  \label{fig:poincaredomain}
\end{figure}

We are going to derive a Poincar\'{e} inequality on a certain specific domain $\Omega_L = (0,L)^2 \backslash \overline{B(0,R_0)}$, see Figure \ref{fig:poincaredomain}. 
We choose polar coordinates $(r,\phi)$ in $\mathbb{R}^2$. The function $b(r)$ below is such that 
\begin{equation}
\label{eq:defOmegaL}
\Omega_L = \{ ( r \cos \phi, r \sin \phi ) \, | \, r \in (0,L), \phi \in (b(r),\pi/2 - b(r) ) \},
\end{equation}
that is,
\begin{equation}
b(r) := 
\begin{cases}
0,& 0 < r \leq  L, \\
\arccos\frac{L}{r}, & L < r < \sqrt{2} L.
\end{cases}
\end{equation}
Let $u \in C^\infty(\mathbb{R}^2)$, be such that $u$ vanishes in a neighborhood of the circular arc in $\partial \Omega_L$. Using the fundamental theorem of calculus, we find
\begin{equation}
\begin{split}
\int_{\Omega_L} |u|^2 & =
\int_{R_0}^{\sqrt{2}L} \int_{b(r)}^{\pi/2 - b(r)} |u|^2 \, d\phi \, r \, dr \\
&= \int_{R_0}^{\sqrt{2} L} \int_{b(r)}^{\pi/2-b(r)} \left| 
\int_{R_0}^{r} 
  \frac{\partial u}
       {\partial r} (s, \phi) \,
ds \right|^2 \, d\phi \, r \, dr \\
& \leq \int_{R_0}^{\sqrt{2} L} \int_{b(r)}^{\pi/2-b(r)} ( r - R_0 ) \int_{R_0}^r |\nabla u|^2(s,\phi) 
\frac{s}{s} \, ds \, d\phi \, r \, dr \\
& \leq \frac{1}{R_0} \int_{R_0}^{\sqrt{2} L} \int_{R_0}^r \int_{b(r)}^{\pi/2-b(r)}  |\nabla u|^2(s,\phi) 
s \, d\phi \, ds (r-R_0) r \, dr \\
& \leq \frac{1}{R_0} \int_{R_0}^{\sqrt{2} L} \int_{R_0}^{\sqrt{2} L} \int_{b(s)}^{\pi/2 - b(s)}
|\nabla u|^2 (s,\phi) s \, d\phi \, ds (r - R_0) r \, dr \\
& \leq   \frac{2 \sqrt{2} L^3}{3 R_0 } \int_{\Omega_L} |\nabla u|^2.
\end{split}
\end{equation}

Now, let $f \in C^\infty(\mathbb{R}^2)$ such that $f$ vanishes on $\partial M_i$. When we subdivide $M_i$ in scalings and rotations of $\Omega_L$, apply the above Poincar\'{e} inequality and sum over all contributions, we get
\begin{equation}
\int_{M_i} |f|^2 \leq \frac{ 2^{-3i} }{ R_0(i) } \int_{M_i} |\nabla f|^2.
\end{equation}
Therefore,
\begin{equation}
\lambda_1 \geq \frac{\int_{M_i} |\nabla f|^2 }{ \int_{M_i} |f|^2 } \geq 2^{3i} R_0(i).
\end{equation}
If we choose for instance $R_0(i) = 2^{-5i/2}$, the eigenvalues of $M_i$ diverge to $\infty$, while $M_i$ converges to a solid square in the flat sense.
\end{Example}

\bibliography{ReferencesFunctionsOnCurrents}
\bibliographystyle{plain}

\end{document}